\newcommand{\vertiii}[1]{{\left\vert\kern-0.25ex\left\vert\kern-0.25ex\left\vert #1 
    \right\vert\kern-0.25ex\right\vert\kern-0.25ex\right\vert}}
\newtheorem{assumption}{Assumption}
\newtheorem{theorem}{Theorem}[section]
\newtheorem{lemma}[theorem]{Lemma}
\theoremstyle{definition}
\newtheorem{definition}[theorem]{Definition}
\newtheorem{example}[theorem]{Example}
\theoremstyle{remark}
\newtheorem{remark}[theorem]{Remark}
\numberwithin{equation}{section}
\def\w{{{w}}}
\def\r{{{r}}}
\def\bV{{\boldsymbol V}}
\def\bu{{\boldsymbol u}}
\def\bv{{\boldsymbol v}}
\renewcommand{\div}{\operatorname{div}}
\renewcommand{\div}{\operatorname{div}}
\def\bH{\boldsymbol H}
\def\bV{\boldsymbol V}
\def\bQ{\boldsymbol Q}
\def\bbf{\boldsymbol f}
\def\bf{\boldsymbol f}
\def\bu{\boldsymbol u}
\def\bv{\boldsymbol v}
\def\bbu{\bar{\boldsymbol u}}
\def\bbv{\bar{\boldsymbol v}}
\def\bw{\boldsymbol w}
\def\bp{\boldsymbol p}
\def\bq{\boldsymbol q}
\def\bz{\boldsymbol z}
\def\Divv{\text{Div}}
\begin{document}

% \title[short text for running head]{full title}
\title[New stability analysis of saddle-point problems]
{A new practical framework for the stability analysis of perturbed saddle-point problems and applications}

%    Only \author and \address are required; other information is
%    optional.  Remove any unused author tags.

%    author one information
% \author[short version for running head]{name for top of paper}

\author{Qingguo Hong*}
\address{Department of Mathematics, Pennsylvania State University, University Park, PA 16802, USA}
\curraddr{}
\email{huq11@psu.edu}
\thanks{}

%    author two information
\author{Johannes Kraus}
\address{Faculty of Mathematics, University of Duisburg-Essen, Thea-Leymann-Straße 9, Essen 45127, Germany}
\curraddr{}
\email{johannes.kraus@uni-due.de}
\thanks{J. Kraus and M. Lymbery acknowledge the support of this work by the Deutsche Forschungsgemeinschaft (DFG, German Research Foundation) as part of the project ``Physics-oriented solvers for multicompartmental poromechanics'' under grant number 456235063}

\author{Maria Lymbery}
\address{Faculty of Mathematics, University of Duisburg-Essen, Thea-Leymann-Straße 9, Essen 45127, Germany}
\curraddr{}
\email{maria.lymbery@uni-due.de}
\thanks{}

\author{Fadi Philo}
\address{Faculty of Mathematics, University of Duisburg-Essen, Thea-Leymann-Straße 9, Essen 45127, Germany}
\curraddr{}
\email{fadi.philo@uni-due.de}
\thanks{}

%    \subjclass is required.
\subjclass[2020]{Primary 65N12, 65J05,  65F08, 65N30 }

\date{}

\dedicatory{}

%    Abstract is required.
\begin{abstract}
In this paper we prove a new abstract stability result for perturbed saddle-point problems based on a norm fitting technique.
We derive the stability condition according to Babu\v{s}ka's theory from a small inf-sup condition, similar to  the famous
Ladyzhenskaya-Babu\v{s}ka-Brezzi (LBB) condition, and the other standard assumptions in Brezzi's theory, in a combined
abstract norm. The construction suggests to form the latter from individual {\it fitted} norms that are composed from
proper seminorms. 

This abstract framework not only allows for simpler (shorter) proofs of many stability results but also
guides the design of parameter-robust norm-equivalent preconditioners. These benefits are demonstrated on 
mixed variational formulations of generalized Poisson, Stokes, vector Laplace and Biot's equations. 
\end{abstract}

\maketitle

\section{Introduction}\label{sec:introduction} 
Saddle-point problems (SPPs) arise in various areas of computational science and engineering ranging
from computational fluid dynamics~\cite{GiraultRaviart1979finite,Temam1984NavierStokes,Glowinski2003finite}, elasticity~\cite{ArnoldWinther2002mixed,ErnGuermond2004theory,Braess2007finite}, 
and electromagnetics~\cite{Monk2003finite,Boffi2013mixed} to
computational finance~\cite{KwokZheng2018saddlepoint}. 
Moreover, SPPs play a vital role
in the context of image reconstruction~\cite{Hall1979computer},
model order reduction~\cite{Schilders2008model},
constrained optimization~\cite{Gill1981practical},
optimal control~\cite{Battermann1998preconditioners}, and
parameter identification~\cite{Burger2002iterative}, to mention only a few but important applications.

In the mathematical modeling of multiphysics phenomena described by (initial-) boundary-value problems
for systems of partial differential equations, SPPs often naturally arise and are frequently posed in a
variational formulation. Mixed finite element methods and other discretization techniques can be and have been
successfully used for their discretization and numerical solution, see,
e.g.~\cite{Brezzi1991mixed,ElmanSilvesterWathen2005finite,BenziGolubLiesen2005numerical,Boffi2013mixed}
and the references therein. 

The pioneering works laying the foundations of the solution theory for SPPs 
%answering important questions as their well-posedness,
have been conducted by Jind\v{r}ich Ne\v{c}as, Olga Ladyzhenskaya, Ivo Babu\v{s}ka,
and Franco Brezzi~\cite{Necas1967methodes,Ladyzhenskaya1969mathematical,Babuska1971error,Brezzi1974}, 
see also the contributions~\cite{BabuskaAziz1972survey,LadyzhenskayaSolonnikov1976problems}.

Designing and analyzing discretizations and solvers for SPPs require a careful study of the mapping properties
of the underlying operators. Of particular interest are their continuity and stability, which not only guarantee
the well-posedness of (continuous and discrete) mathematical models but also provide the basis for error
estimates and a convergence analysis of iterative methods and preconditioners, see,
e.g.~\cite{LoghinWathen2004analysis,ElmanSilvesterWathen2005finite,mardal2011preconditioning,Boffi2013mixed},
for a review see also~\cite{BenziGolubLiesen2005numerical}.

Saddle-point problems/systems are of a two-by-two block form and characterized by an operator/matrix
of the form
\begin{equation}\label{A_classic_block_form}
\mathcal{A} = \begin{pmatrix}
A & B_1^T \\
B_2 & -C
\end{pmatrix}
\end{equation}
where $A$ and $C$ denote positive semidefinite operators/matrices and $B_1^T$ the adjoint/transpose of an 
operator/matrix $B_1$. We consider the symmetric case in this paper where $A^T=A$, $C^T=C$, and
$B_1=B_2=B$. Problems in which $C \neq 0$ are often referred to as {\it perturbed saddle-point problems}. 
%They are in the focus of this paper. % (under the assumption $B_1=B_2$).

In~\cite{Zulehner2011nonstandard} a technique has been proposed to determine norms for parameter-dependent 
SPPs providing necessary and sufficient conditions 
for their well-posedness and leading to robust estimates of the solution in terms of the data. 
A drawback of this approach, however, 
is that these conditions are often hard to verify in practice 
%due to the fact that 
as the operators inducing the norms are 
defined only implicitly.

More general SPPs in which $A$ (and $C$) are allowed to be nonsymmetric and $B_1\neq B_2$
have also been studied by many authors, see, e.g.,~\cite{Nicolaides1982existence,Brezzi1991mixed,Boffi2013mixed},
and the references therein.
Their analysis, in general is more complicated and is mostly done following the {\it monolithic} approach, i.e., imposing
conditions on $\mathcal{A}$ rather than on $A$, $B_1$, $B_2$, and $C$ separately. 

Our work is motivated by the stability analysis of variational problems occurring in
poromechanics~(cf.~\cite{Coussy2004poromechanics}), a subarea of continuum mechanics
which originates from the early works of~Terzaghi and Biot~\cite{Terzaghi1925erdbaumechanic,Biot1941general}.
Various formulations of Biot's consolidation model have been considered and analyzed since it had been introduced
in~\cite{Biot1941general,Biot1955theory}, including two-field (\cite{MuradLoula1992improved,MuradLoula1994stability}),
three-field (\cite{PhillipsWheeler2007coupling1,PhillipsWheeler2007coupling2,oyarzua2016locking,lee2017parameter,HongKraus2018,chen2020robust}),
and four-field-formulations (\cite{Yi2014convergence,Lee2016robust,Kumar2020conservative}),
for generalizations to several fluid networks as considered in multiple network poroelastic theory (MPET),
see also~\cite{Bai_etal1993multi,Guo2019on,Hong2019conservativeMPET,Hong2020ParameterUzawa,HongEtAl2020parameter,lee2018mixed,TullyVentikos2011cerebral}.

Although they typically relate more than two physical fields, or quantities
of interest (except for the two-field formulation of Biot's model), 
the variational problems arising from the above-mentioned formulations--subject to a proper grouping or rather
aggregation of variables--result in symmetric two-by-two block systems of saddle point form characterized by a self-adjoint
operator~$\mathcal{A}$.

The abstract framework presented in the next section of this paper applies to such saddle-point operators. After introducing
some notation, we recall the classical stability results of Babu\v{s}ka and Brezzi for classical (unperturbed) SPPs.
Next, we focus on perturbed (symmetric) SPPs, initially summarizing some of the additional conditions which, together
with the Ladyzhenskaya-Babu\v{s}ka-Brezzi (LBB) condition (small inf-sup condition), imply the necessary and sufficient
stability condition of Babu\v{s}ka (big inf-sup condition). Our main theoretical result then follows in Section~\ref{sec:stability}
where we propose a generalization of the classical Brezzi conditions for the analysis of perturbed SPPs with $C\neq 0$.
These new conditions imply the Babu\v{s}ka condition. The {fitted norms on which they are} based provide a constructive tool for
designing norm-equivalent preconditioners.

This paper does not discuss discretizations and discrete variants of inf-sup conditions. However, the proposed framework
directly translates to discrete settings where it also allows for shorter and simpler proofs of the well-posedness of
discrete models and error-estimates for stable discretizations.
 
\section{Abstract framework}\label{sec:abstract}
\subsection{Notation and problem formulation}
Consider two Hilbert spaces $V$ and $Q$ equipped with the norms 
$\Vert \cdot \Vert_V$ and $\Vert \cdot \Vert_Q$ induced by the scalar products 
$(\cdot,\cdot)_V$ and $(\cdot,\cdot)_Q$, respectively. 
We denote their product space by $Y:=V\times Q$ and endow it  
with the norm $\Vert \cdot \Vert_Y$ defined by 
\begin{equation}\label{y_norm}
\Vert y \Vert_Y^2 = (y,y)_Y = (v,v)_V + (q,q)_Q = \Vert v\Vert_V^2 + \Vert q\Vert_Q^2  \qquad \forall y = 
(v;q):= \begin{pmatrix}v\\ q\end{pmatrix}\in Y. 
\end{equation}

Next, we introduce an abstract 
%continuous 
bilinear form $\mathcal{A}((\cdot \,;\cdot),(\cdot \,;\cdot))$ 
on $Y\times Y$ defined by
\begin{equation}\label{bilinear_form}
\mathcal{A}((u;p),(v;q)):=a(u,v)+b(v,p)+b(u,q)-c(p,q)
\end{equation}
for some symmetric positive semidefinite (SPSD)
bilinear forms $a(\cdot,\cdot)$ on 
$V\times V$, $c(\cdot,\cdot)$ on $Q \times Q$, i.e., 
\begin{align}
a(u,v) & = a(v,u) \qquad \forall u,v\in V, \\
a(v,v) & \ge 0 \qquad \forall v\in V, \\
c(p,q) & = c(q,p) \qquad \forall p,q\in Q, \\
c(q,q) & \ge 0 \qquad \forall q\in Q.
\end{align}
and a bilinear form 
$b(\cdot,\cdot)$ on $V \times Q$.

We assume that $a(\cdot,\cdot)$, $b(\cdot,\cdot)$ and $c(\cdot,\cdot)$ are 
continuous with respect to the norms $\Vert \cdot\Vert_V$ and $\Vert \cdot \Vert_Q$, i.e.,
\begin{align}
a(u,v) & \le \bar{C}_a \Vert u\Vert_V \Vert v\Vert_V \qquad \forall u,v\in V, 
\label{a_cont}\\
b(v,q) & \le \bar{C}_b \Vert v\Vert_V \Vert q\Vert_Q  \qquad \forall v\in V,\forall q\in Q, 
\label{b_cont}\\
c(p,q) & \le \bar{C}_c \Vert p\Vert_Q \Vert q\Vert_Q  \qquad \forall p,q\in Q. 
\label{c_cont}
\end{align} 

Then each of these bilinear forms defines a bounded linear operator as follows: 
\begin{subequations}
\begin{align}
A: V\rightarrow V': \,\, & \langle Au,v\rangle_{V'\times V} = a(u,v), \qquad \forall u,v\in V, \\
C: Q\rightarrow Q': \,\, & \langle Cp,q\rangle_{Q'\times Q} = c(p,q), \qquad \forall p,q\in Q, \\
B: V\rightarrow Q': \,\, & \langle Bv,q\rangle_{Q'\times Q}  = b(v,q), \qquad \forall v \in V, \forall q\in Q, \\ 
B^T: Q\rightarrow V': \,\, & \langle v, B^T q\rangle_{V\times V'} = b(v,q), \qquad \forall v \in V, \forall q\in Q.
\end{align}
\end{subequations}
Here, $V'$ and $Q'$ denote the dual spaces of $V$ and $Q$ and 
$\langle \cdot, \cdot \rangle $ the corresponding duality pairings.  

Associated with the bilinear form $\mathcal{A}$ defined in~\eqref{bilinear_form} we consider the following abstract perturbed saddle-point problem 
\begin{equation}\label{PSPP}
\mathcal{A}((u;p),(v;q)) = \mathcal{F}((v;q))\qquad \forall v\in V, \forall q \in Q
\end{equation}
which can also be written as 
$$\mathcal{A}(x,y) = \mathcal{F}(y)\qquad \forall y \in Y,$$
thereby using the definitions $x=(u;p)$ and $y=(v;q)$, or, in operator form 
\begin{equation}\label{PSPP_o}
\mathcal{A} x = \mathcal{F}
\end{equation}
where 
\begin{align}
\mathcal{A}: Y\rightarrow Y': \,\, & \langle \mathcal{A}x,y
\rangle_{Y'\times Y} = \mathcal{A}(x,y), \qquad \forall x,y\in Y
\end{align}
and $\mathcal{F}\in Y'$, i.e., $\mathcal{F}: Y\rightarrow \mathbb{R}: \mathcal{F}(y)=\langle \mathcal{F},y \rangle_{Y'\times Y}$ 
for all $y \in Y.$

The operator $\mathcal{A}$ can also be represented in block form by 
\begin{equation}\label{A_block_form}
\mathcal{A} = \begin{pmatrix}
A & B^T \\
B & -C
\end{pmatrix}.
\end{equation}

Problem~\eqref{PSPP} (and \eqref{PSPP_o}) is called a perturbed saddle-point problem (in operator form) 
when $c(\cdot,\cdot) \not \equiv 0$ and a classical saddle-point problem in 
the case $c(\cdot,\cdot)\equiv 0$. 
%we will refer to as CSPP whereas the case $c(\cdot,\cdot)\not \equiv 0$  
\subsection{Babu\v{s}ka's and Brezzi's conditions for stability of saddle-point problems}

As is well-known from~\cite{Babuska1971error}, the abstract variational problem~\eqref{PSPP} is well-posed
under the following necessary and sufficient conditions (i) and (ii) given in the following theorem. 

\begin{theorem}[Babu\v{s}ka \cite{Babuska1971error}]\label{thm:1} 
Let $\mathcal{F}\in Y'$ be a bounded linear functional. 
Then the saddle-point problem~\eqref{PSPP} is well-posed if and only if there exist positive 
constants $\bar{C}$ and $\alpha$
for which 
the conditions %on $\mathcal{A}(\cdot,\cdot)$:
%satisfies the following conditions:
%\begin{itemize}
%\item[(i)] 
%there exists a constant $\bar{C}>0$ such that 
\begin{equation}\label{boundedness_A}
\mathcal{A}(x, y) \le \bar{C} \Vert x\Vert_{Y} \Vert y\Vert_{Y} \qquad \forall x, y \in Y,
\end{equation}
%\item[(ii)]  
%there exists a constant $\beta>0$ such that 
\begin{equation}\label{inf_sup_A}
\inf_{x \in Y} \sup_{y \in Y} \frac{\mathcal{A}(x,y)}{\Vert x\Vert_{Y} \Vert y\Vert_{Y}} \ge \underline{\alpha}>0
\end{equation}
%\end{itemize}
hold.
%Then there exists a unique solution $\bx^* \in \bm X$ of the variational problem~\eqref{var_2_3}. 
The solution $x$ then 
satisfies the stability estimate 
\begin{equation*}
\Vert x\Vert_{Y} \le \frac{1}{ \underline{\alpha}} \sup_{y \in Y}\frac{\mathcal{F}(y)}{\Vert y\Vert_{Y}}
=: \frac{1}{ \underline{\alpha}}\Vert \mathcal{F}\Vert_{Y'}.
\end{equation*}
\end{theorem}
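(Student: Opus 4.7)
The plan is to recast the statement as a result about the bounded linear operator $\mathcal{A}: Y \to Y'$ already introduced above and prove sufficiency and necessity separately. For sufficiency, condition \eqref{boundedness_A} simply reads $\|\mathcal{A}\|_{\mathcal{L}(Y,Y')} \leq \bar{C}$, while the inf-sup condition \eqref{inf_sup_A} is equivalent to the a priori estimate
\begin{equation*}
\|\mathcal{A} x\|_{Y'} \;=\; \sup_{y \in Y} \frac{\mathcal{A}(x,y)}{\|y\|_Y} \;\geq\; \underline{\alpha}\,\|x\|_Y \qquad \forall x \in Y,
\end{equation*}
which at once yields injectivity of $\mathcal{A}$ and closedness of its range in $Y'$.

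To promote this to bijectivity I would exploit that $\mathcal{A}$ is symmetric: since $a$ and $c$ are symmetric and $b$ enters \eqref{bilinear_form} through the symmetric combination $b(v,p)+b(u,q)$, one checks that $\mathcal{A}((u;p),(v;q))=\mathcal{A}((v;q),(u;p))$, so after identifying $Y''$ with $Y$ the adjoint $\mathcal{A}^{*}$ coincides with $\mathcal{A}$. Hence \eqref{inf_sup_A} also holds with the roles of $x$ and $y$ interchanged, giving $\ker(\mathcal{A}^{*})=\{0\}$. Combined with the closed-range property, the standard orthogonality argument (Hahn-Banach, or the closed range theorem) forces $\operatorname{range}(\mathcal{A})=Y'$, so \eqref{PSPP_o} admits a unique solution $x\in Y$ for every $\mathcal{F}\in Y'$. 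The stability estimate is then immediate from the reformulated inf-sup: $\underline{\alpha}\,\|x\|_Y \leq \|\mathcal{A} x\|_{Y'} = \|\mathcal{F}\|_{Y'}$.

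For necessity, I would assume that \eqref{PSPP_o} is well-posed, so that $\mathcal{A}^{-1}: Y'\to Y$ exists and is bounded. The constant $\bar{C}$ in \eqref{boundedness_A} is inherited from the standing continuity assumptions \eqref{a_cont}--\eqref{c_cont} via Cauchy-Schwarz on $Y = V\times Q$, while \eqref{inf_sup_A} with $\underline{\alpha}=\|\mathcal{A}^{-1}\|^{-1}$ is extracted from
\begin{equation*}
\|x\|_Y \;=\; \|\mathcal{A}^{-1}\mathcal{A} x\|_Y \;\leq\; \|\mathcal{A}^{-1}\|\,\|\mathcal{A} x\|_{Y'} \;=\; \|\mathcal{A}^{-1}\|\,\sup_{y \in Y}\frac{\mathcal{A}(x,y)}{\|y\|_Y}.
\end{equation*}

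The main obstacle I anticipate is the surjectivity step in the sufficiency direction, since the single inf-sup bound \eqref{inf_sup_A} is in general not enough---one would typically also assume the transposed inf-sup (non-degeneracy in the second argument). Handling this cleanly requires the symmetry of $\mathcal{A}$ built into \eqref{bilinear_form} to be invoked explicitly, in order to convert \eqref{inf_sup_A} into the needed statement about $\ker(\mathcal{A}^{*})$.
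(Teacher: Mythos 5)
The paper states this result as Babu\v{s}ka's theorem with a citation to \cite{Babuska1971error} and does not reproduce a proof, so there is no in-paper argument to compare against; your attempt has to be judged on its own merits, and it is correct. You recast everything in the operator form $\mathcal{A}\colon Y\to Y'$, read off injectivity and closedness of the range from the equivalent formulation $\Vert\mathcal{A}x\Vert_{Y'}\ge\underline{\alpha}\Vert x\Vert_Y$, and then close the argument via $\ker(\mathcal{A}^{*})=\{0\}$. The point you flag in your last paragraph is exactly the right one to worry about: in the general Banach--Ne\v{c}as--Babu\v{s}ka theorem a single inf-sup condition is not enough and one must also impose non-degeneracy in the second argument. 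The paper can omit that second condition precisely because it restricts to the symmetric case ($a$ and $c$ symmetric, $B_1=B_2=B$), so that $\mathcal{A}((u;p),(v;q))=\mathcal{A}((v;q),(u;p))$ and hence, after identifying $Y''\cong Y$, the adjoint $\mathcal{A}^{*}$ equals $\mathcal{A}$; the transposed inf-sup is then automatic and the closed-range/dense-range argument gives surjectivity. Your necessity direction, extracting $\underline{\alpha}=\Vert\mathcal{A}^{-1}\Vert^{-1}$ from boundedness of $\mathcal{A}^{-1}$ and obtaining $\bar C$ from the standing continuity assumptions \eqref{a_cont}--\eqref{c_cont}, is likewise correct, and the final stability bound $\Vert x\Vert_Y\le\underline{\alpha}^{-1}\Vert\mathcal{F}\Vert_{Y'}$ is an immediate consequence of the inf-sup estimate applied to $\mathcal{A}x=\mathcal{F}$.
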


\begin{remark}
Estimate~\eqref{boundedness_A} ensures continuity, that is, boundedness of 
the operator $\mathcal{A}$ from above, 
whereas \eqref{inf_sup_A} is a stability condition, sometimes 
referred to as Babu\v{s}ka condition, which grants boundedness of $\mathcal{A}$ from below.
\end{remark}
{
Using the operator notations introduced in \eqref{A_block_form}, the conditions \eqref{boundedness_A} and \eqref{inf_sup_A} can be rewritten as 
\begin{equation}\label{cor_bounded_A}
 \underline{\alpha} \Vert y\Vert_{Y} \le \Vert \mathcal{A} y\Vert_{Y'}\le  \bar{C} \Vert y\Vert_Y~~ \hbox{for all}~~y\in Y.
\end{equation}
In~\cite{Zulehner2011nonstandard}, the condition \eqref{cor_bounded_A} is characterized by two equivalent conditions as stated in the following theorem.
\begin{theorem}[Zulehner \cite{Zulehner2011nonstandard}]
 If there are constants $\underline{\gamma}_{v}, \bar{\gamma}_{v},\underline{\gamma}_{q}, \bar{\gamma}_{q}>0$ such that
 \begin{equation}\label{Z1}
 \underline{\gamma}_{v}\Vert v\Vert_{V}^{2} \leq a(v, v)+ \left(\sup_{q\in Q}\frac{b(v,q)}{\Vert q\Vert_Q} \right)^2 \leq \bar{\gamma}_{v}\Vert v\Vert_{V}^{2} \quad\forall  v \in V,
 \end{equation}
and
 \begin{equation}\label{Z2}
 \underline{\gamma}_{q}\Vert q\Vert_{Q}^{2} \leq c(q, q)+  \left(\sup_{v\in V}\frac{b(v,q)}{\Vert v\Vert_V} \right)^2 \leq \bar{\gamma}_{q}\Vert q\Vert_{Q}^{2} \quad \forall q \in Q,
 \end{equation}
then \eqref{cor_bounded_A}
is satisfied with constants $\underline{\alpha}, \bar{C}>0$ that depend only on $\underline{\gamma}_{v}, \bar{\gamma}_{v},\underline{\gamma}_{q}, \bar{\gamma}_{q}$.
And, vice versa, if the estimates \eqref{cor_bounded_A} are satisfied with constants $\underline{\alpha}, \bar{C}>0$, then the estimates \eqref{Z1} and \eqref{Z2} are satisfied with constants $\underline{\gamma}_{v}, \bar{\gamma}_{v},\underline{\gamma}_{q}, \bar{\gamma}_{q}>0$ that depend only on $\underline{\alpha}, \bar{C}>0$.
\end{theorem}
\begin{remark}
The two conditions \eqref{boundedness_A} and \eqref{inf_sup_A}, entangling the bilinear forms $a(\cdot,\cdot), b(\cdot,\cdot)$ and $c(\cdot,\cdot)$, are equivalent to the two conditions \eqref{Z1} and \eqref{Z2} which entangle the bilinear forms $a(\cdot,\cdot)$ and $b(\cdot,\cdot)$ and the bilinear forms $c(\cdot,\cdot)$ and $b(\cdot,\cdot)$, respectively.
%As shown in \cite{Zulehner2011nonstandard}, the conditions \eqref{Z1} and \eqref{Z2}  provide some convenience to construct preconditioner for $\mathcal{A}$. 
However, verifying the two big conditions \eqref{Z1} and \eqref{Z2} is sometimes difficult or even impractical. Our aim is to propose a framework untangling the bilinear forms $a(\cdot,\cdot), b(\cdot,\cdot)$ and $c(\cdot,\cdot)$ and impose Brezzi-type conditions, in particular a small coercivity condition on $a(\cdot,\cdot)$ and a small inf-sup condition on $b(\cdot,\cdot)$. 
\end{remark}
}

% later also LBB (Ladyzhenskaya-Babu\v{s}ka-Brezzi)

For the classical saddle-point problem, i.e., $c(\cdot,\cdot)\equiv 0$, the following theorem which we formulate 
under the conditions that $a(\cdot,\cdot)$ is symmetric positive semidefinite 
and 
\begin{equation}\label{b_empty}
{\rm Ker}(B^T):= \{ q\in Q: b(v,q)=0 \,\, \forall v\in V \} = \emptyset
\end{equation}
has been proven in~\cite{Brezzi1974}, see 
also \cite{Brezzi1991mixed,Boffi2013mixed}. 

\begin{theorem}[Brezzi \cite{Brezzi1974}]\label{brezzi1}
Assume that the bilinear forms $a(\cdot,\cdot)$ and $b(\cdot,\cdot)$ are continuous on $V\times V$ and on 
$V\times Q$, respectively, $a(\cdot,\cdot)$ is symmetric positive semidefinite, and also that 
%\begin{itemize}
%\item[(i)] 
{
\begin{equation}\label{a_coerc_ker}
a(v,v)\ge \underline{C}_a \Vert v\Vert_V^2 \qquad\forall v \in {\rm Ker}(B),
\end{equation}
}
%\item[(ii)] 
\begin{equation}\label{small_inf_sup}
\inf_{q \in Q}\sup_{v\in V} \frac{ b(v,q)}{\Vert v\Vert_V \Vert q\Vert_Q} 
\ge \beta >0, 
\end{equation}
%\end{itemize}
hold. Then the classical saddle-point problem (problem~\eqref{PSPP} with $c(\cdot,\cdot)\equiv 0$) is well-posed.

\end{theorem}

\begin{remark}
Note that if ${\rm Ker}(B^T)\neq \emptyset$, the statement of the above theorem (Theorem~\ref{brezzi1}) remains valid if 
we identify any two elements $q_1$, $q_2$ for which $q_0:=q_1-q_2$ is an element of ${\rm{Ker}}(B^T)$, i.e., 
replacing the space $Q$ with the quotient space $Q/{\rm{Ker}}(B^T)$ and also the norm  
$\Vert \cdot \Vert_Q$ with $\Vert \cdot \Vert_{Q/{\rm{Ker}}(B^T)}$, the latter being defined by 
$$
\Vert q \Vert_{Q/{\rm{Ker}}(B^T)} = \inf_{q_0\in {\rm Ker}(B^T)} \Vert q+q_0\Vert_Q. 
$$
In this case, the solution $p$ is only unique up to an arbitrary element $p_0 \in {\rm Ker}(B^T)$.
\end{remark}

For the classical saddle-point problem Brezzi's stability condition~\eqref{small_inf_sup} 
and the continuity of $a(\cdot,\cdot)$ imply Babu\v{s}ka's stability condition~\eqref{inf_sup_A}, 
see~\cite{Demkowicz2006BabuskaBrezzi}, where it has also been shown that from~\eqref{inf_sup_A}
it follows \eqref{small_inf_sup} and {the inf-sup condition for $a(\cdot,\cdot)$ in the kernel of $B$}, the latter 
being equivalent to the coercivity estimate~\eqref{a_coerc_ker} if $a(\cdot,\cdot)$ is symmetric positive semidefinite.   

Obviously, the stability condition~\eqref{inf_sup_A} directly applies to perturbed saddle-point problems, 
a reason why they can be studied using Babu\v{s}ka's theory. 
However, conditions~\eqref{a_coerc_ker} and \eqref{small_inf_sup} together with the continuity of $a(\cdot,\cdot)$, $b(\cdot,\cdot)$ 
and $c(\cdot,\cdot)$ in general are not sufficient to guarantee the stability condition~\eqref{inf_sup_A} 
when $c(\cdot,\cdot)\not \equiv 0$.
Additional conditions to ensure~\eqref{inf_sup_A} have been studied, for example in~\cite{Brezzi1991mixed,Braess1996stability,Boffi2013mixed}.

In~\cite{Brezzi1991mixed} it has been shown that a condition on the kernel of $B^T$ can be used as an additional assumption 
to ensure well-posedness of the perturbed saddle-point problem, that is, in particular, 
for Babu\v{s}ka's inf-sup condition~\eqref{inf_sup_A} to hold. 
This condition is expressed in terms of the following auxiliary problem 
\begin{equation}\label{1.54}
\epsilon(p_0,q)_Q+c(p_0,q) = - c(p^{\perp},q), \qquad \forall q\in {\rm Ker}(B^T)
\end{equation}
and requires the following general assumption: 

\begin{assumption}\label{assumption0}
%\begin{namedassumption}{A0}\label{assumption0}
There exists a $\gamma_0>0$ such that for every $p^{\perp}\in ({\rm Ker}(B^T))^{\perp}$ 
and for every $\epsilon >0$ it holds that {the norm of the solution $p_0\in {\rm Ker}(B^T)$ of~\eqref{1.54} is bounded by 
$ \Vert p_0\Vert_Q \le \frac{1}{\gamma_0} \Vert p^{\perp}\Vert_Q$}.  
%\end{namedassumption}
\end{assumption}
 
The theorem then reads as follows:

\begin{theorem}[Brezzi and Fortin \cite{Brezzi1991mixed}]\label{brezzi2}
Assume that $a(\cdot, \cdot)$, $b(\cdot,\cdot)$ and $c(\cdot,\cdot)$ are 
continuous bilinear forms on $V\times V$, on $V\times Q$, and on 
$Q\times Q$, respectively. Further assume that $a(\cdot,\cdot)$ and 
$c(\cdot,\cdot)$ are symmetric positive semidefinite. Finally, 
let~\eqref{a_coerc_ker}, \eqref{small_inf_sup} (conditions (i) and (ii) 
from Theorem~\ref{brezzi1}) and Assumption~\ref{assumption0} be satisfied. Then for every $f\in V'$ and every $g\in {\rm Im} (B)$ problem~\eqref{PSPP} 
with $\mathcal{A}$ as defined in~\eqref{bilinear_form} and  $\mathcal{F}(y) = \langle f,v\rangle_{V'\times V}+ \langle g,q\rangle_{Q' \times Q}$ 
has a unique solution 
$x=(u;p)$ in $Y=V\times Q/M$ where 
\begin{equation*}
M = {\rm Ker} (B^T) \cap {\rm Ker}(C).
\end{equation*} 
Moreover, the estimate 
\begin{equation*}
\Vert u\Vert_V +\Vert p\Vert_{Q/{\rm Ker}(B^T)} \le K (\Vert f\Vert_{V'} +\Vert g\Vert_{Q'})
\end{equation*}
holds with a constant $K$ only depending on $\bar{C}_a$, $\bar{C}_c$, $\underline{C}_a$, 
$\beta$ and $\gamma_0$.
\end{theorem}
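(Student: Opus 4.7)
My plan is to treat this via regularization: introduce a family of classical saddle-point problems with a coercive $(2,2)$-block parametrised by $\epsilon > 0$, apply standard theory to each, derive $\epsilon$-uniform bounds using Assumption~\ref{assumption0}, and then pass to the limit $\epsilon \to 0^+$. The quotient by $M = {\rm Ker}(B^T) \cap {\rm Ker}(C)$ is the natural setting since any $(0;p_0)$ with $p_0 \in M$ is annihilated by $\mathcal{A}$, so uniqueness can only be expected modulo $M$.

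Concretely, I consider
\begin{equation*}
\mathcal{A}_\epsilon((u;p),(v;q)) := a(u,v) + b(v,p) + b(u,q) - c(p,q) - \epsilon(p,q)_Q.
\end{equation*}
For each $\epsilon>0$ the form $c(\cdot,\cdot)+\epsilon(\cdot,\cdot)_Q$ is coercive on $Q$, so together with \eqref{small_inf_sup} and \eqref{a_coerc_ker} the classical theory of perturbed saddle-point problems with coercive $(2,2)$-block yields a unique $(u_\epsilon;p_\epsilon)\in V\times Q$ solving $\mathcal{A}_\epsilon((u_\epsilon;p_\epsilon),\cdot)=\mathcal{F}(\cdot)$. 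I then split $p_\epsilon = p_\epsilon^\perp + p_{0,\epsilon}$ with $p_\epsilon^\perp\in({\rm Ker}(B^T))^\perp$ and $p_{0,\epsilon}\in {\rm Ker}(B^T)$. Since $g\in{\rm Im}(B)$ annihilates ${\rm Ker}(B^T)$ and $b(u_\epsilon,q)=0$ for $q\in{\rm Ker}(B^T)$, testing the second equation against such $q$ reproduces exactly the auxiliary problem~\eqref{1.54} for $p_{0,\epsilon}$ with right-hand side $-c(p_\epsilon^\perp,\cdot)$ (the cross term $\epsilon(p_\epsilon^\perp,q)_Q$ vanishes by $Q$-orthogonality of the splitting). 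Hence Assumption~\ref{assumption0} gives the $\epsilon$-independent bound $\gamma_0\|p_{0,\epsilon}\|_Q \le \|p_\epsilon^\perp\|_Q$. Applying \eqref{small_inf_sup} and using $b(v,p_\epsilon)=b(v,p_\epsilon^\perp)$ in the first equation bounds $\|p_\epsilon^\perp\|_Q$ by $\|f\|_{V'}+\bar C_a\|u_\epsilon\|_V$, and a standard orthogonal splitting $u_\epsilon = u_{0,\epsilon}+u_\epsilon^\perp$ with respect to ${\rm Ker}(B)$, combined with \eqref{a_coerc_ker} for $u_{0,\epsilon}$ and the equivalent LBB estimate on $V/{\rm Ker}(B)$ for $u_\epsilon^\perp$ (applied to the second equation), bounds $\|u_\epsilon\|_V$ by the data and $\|p_\epsilon\|_Q$. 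Closing this small coupled system of scalar inequalities produces an $\epsilon$-uniform estimate with a constant $K$ depending only on $\bar C_a,\bar C_c,\underline C_a,\beta,\gamma_0$.

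Finally, by weak compactness I extract a subsequence $(u_\epsilon,p_\epsilon)\rightharpoonup(u,p)$. The regularization term $\epsilon(p_\epsilon,q)_Q$ vanishes as $\epsilon\to 0^+$ thanks to the uniform bound, and passing to the limit in $\mathcal{A}_\epsilon((u_\epsilon;p_\epsilon),(v;q))=\mathcal{F}((v;q))$ yields a solution $(u;p)$ of~\eqref{PSPP} inheriting the same bound; linearity and the definition of $M$ give uniqueness modulo $M$. The main obstacle is arranging the $\epsilon$-uniform estimates as a closable coupled system: the whole argument hinges on the fact that Assumption~\ref{assumption0} controls $p_0$ purely in terms of $p^\perp$ with a constant $\gamma_0$ that does not depend on $\epsilon$, so that the estimate for $p_0$ does not feed back destructively into the bounds for $u$ and $p^\perp$.
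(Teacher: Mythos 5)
The paper does not prove this theorem; it cites it from Brezzi--Fortin, so there is no in-paper proof to compare against. That said, your regularization framework is the right one (the $\epsilon$-uniformity built into Assumption~\ref{assumption0} exists precisely to support such a strategy), and your reduction of the ${\rm Ker}(B^T)$-projected second equation to the auxiliary problem~\eqref{1.54} is correct. The limit passage and the uniqueness-modulo-$M$ argument are also fine.

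The gap is in the phrase ``closing this small coupled system of scalar inequalities.'' Written out, your system is
\begin{equation*}
\Vert p_\epsilon^\perp\Vert_Q \le \beta^{-1}\bigl(\Vert f\Vert_{V'} + \bar C_a\Vert u_\epsilon\Vert_V\bigr),\quad
\Vert p_\epsilon\Vert_Q \le (1+\gamma_0^{-2})^{1/2}\Vert p_\epsilon^\perp\Vert_Q,\quad
\Vert u_\epsilon\Vert_V \le c_U\bigl(\Vert f\Vert_{V'}+\Vert g\Vert_{Q'}+\Vert p_\epsilon\Vert_Q\bigr),
\end{equation*}
and chaining them yields $\Vert u_\epsilon\Vert_V \le C_1(\Vert f\Vert_{V'}+\Vert g\Vert_{Q'}) + C_2\Vert u_\epsilon\Vert_V$ with $C_2 = c_U(1+\gamma_0^{-2})^{1/2}\beta^{-1}\bar C_a$. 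Nothing in the hypotheses makes $C_2<1$; it is a product of continuity and inf-sup constants with no smallness, so this does not close. The missing ingredient is the Galerkin/energy identity one gets by choosing $(v;q)=(u_\epsilon;-p_\epsilon)$ in the regularized problem,
\begin{equation*}
a(u_\epsilon,u_\epsilon)+c(p_\epsilon,p_\epsilon)+\epsilon\Vert p_\epsilon\Vert_Q^2 = \langle f,u_\epsilon\rangle - \langle g,p_\epsilon\rangle,
\end{equation*}
together with the Cauchy--Schwarz inequality for the semidefinite form $a$, which upgrades the first bound to $\beta\Vert p_\epsilon^\perp\Vert_Q \le \Vert f\Vert_{V'}+\bar C_a^{1/2}\,a(u_\epsilon,u_\epsilon)^{1/2}$. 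Because the energy identity makes $a(u_\epsilon,u_\epsilon)$ only \emph{sub-linear} in $\Vert u_\epsilon\Vert_V$ and $\Vert p_\epsilon\Vert_Q$, Young's inequality then absorbs the recursive terms for any choice of the constants, and the system genuinely closes $\epsilon$-uniformly. Without this step the argument as written is circular.
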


\begin{remark}
The result in~\cite{Brezzi1991mixed} is more general than Theorem~\ref{brezzi2} in that it applies also to non-symmetric but positive semidefinite $a(\cdot,\cdot)$. We are 
considering only the case of symmetric positive semidefinite $a(\cdot,\cdot)$ in this paper. 
%{ in which case we assume the 
%coercivity of $a(\cdot,\cdot)$ on ${\rm Ker}(B)$ as in Theorems~\ref{brezzi1}--\ref{brezzi2}}.   
\end{remark}

In order to ensure the boundedness (continuity) of the symmetric positive semidefinite bilinear form $c(\cdot,\cdot)$ 
with respect to the norm $\Vert \cdot\Vert_Q$, and more generally the boundedness of $\mathcal{A}(\cdot,\cdot)$ 
with respect to the combined norm $\Vert \cdot \Vert_Y = (\Vert \cdot \Vert_V^2 +\Vert \cdot \Vert_Q^2)^{1/2}$, it is 
natural to include the contribution of $c(\cdot,\cdot)$ in the norm $\Vert \cdot \Vert_Q$, e.g., by 
defining $\Vert \cdot \Vert_Q$ via 
\begin{equation}\label{q_splitting}
\Vert q\Vert^2_Q = \vert q\vert_Q^2 + t^2 c(q,q), \qquad \forall q\in Q,
\end{equation}
for a proper seminorm or norm $\vert \cdot \vert_Q$ and a parameter $t\in [0,1]$.

{As it has been shown} in~\cite{Braess1996stability} the stability of the perturbed saddle-point problem 
then can be proven under Brezzi's conditions for the classical saddle-point problem and the additional condition  
\begin{equation}\label{braess_condition}
\inf_{u\in V} \sup_{(v;q)\in V\times Q} \frac{a(u,v)+b(u,q)}{\vertiii{(v;q)}} \ge \gamma>0
\end{equation} 
where $\vertiii{\cdot}$ is defined by
\begin{equation}\label{triple_norm}
\vertiii{(v;q)}^2 := \Vert v\Vert_V^2 + \vert q\vert_Q^2 + t^2 c(q,q), \qquad t\in [0,1],
\end{equation}
and provides a specific choice of $\Vert \cdot \Vert_Y$, i.e., $\Vert \cdot\Vert_Y = \vertiii{\cdot}$.
The corresponding theorem then reads as: 
\begin{theorem}[Braess \cite{Braess1996stability}]\label{braess_theorem}
Assume that the classical saddle-point problem {with $a(\cdot,\cdot)$ being SPSD and $c(\cdot,\cdot)\equiv 0$} 
is stable, i.e., Brezzi's conditions~\eqref{a_coerc_ker} and 
\eqref{small_inf_sup} are fulfilled. If in addition condition~\eqref{braess_condition} holds with $\gamma >0$
and we choose $t>0$ in~\eqref{triple_norm} for $c(\cdot,\cdot)\not \equiv 0$, 
then the perturbed saddle-point problem~\eqref{PSPP} is stable under the norm $\Vert \cdot\Vert_Y:=\vertiii{\cdot}$ 
and the stability constant $ \underline{\alpha}$ in~\eqref{inf_sup_A} depends only on $\beta$, $\underline{C}_a$ and $\gamma$ 
and the choice of $t$. 
\end{theorem}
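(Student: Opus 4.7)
The plan is to verify the two conditions of Theorem~\ref{thm:1} with $\Vert\cdot\Vert_Y$ replaced by $\vertiii{\cdot}$. Continuity \eqref{boundedness_A} follows routinely from \eqref{a_cont}--\eqref{c_cont} together with $\Vert p\Vert_Q^2 = \vert p\vert_Q^2 + t^2 c(p,p) \le \vertiii{(u;p)}^2$ and $c(p,p)^{1/2} \le \vertiii{(u;p)}/t$, giving a continuity constant that depends on $t$.

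For the big inf-sup \eqref{inf_sup_A}, given $x = (u;p)\in Y$ I will construct a test function $y$ as a weighted combination of three Fortin-type directions: first, $y_1 := (u;-p)$, for which a direct expansion of \eqref{bilinear_form} gives $\mathcal{A}(x,y_1) = a(u,u) + c(p,p)$ since the $b$-contributions cancel; second, $y_2 := (v_p;0)$ with $v_p \in V$ chosen by \eqref{small_inf_sup} and normalized so that $\Vert v_p\Vert_V = \vert p\vert_Q$ and $b(v_p,p) \ge \beta\, \vert p\vert_Q\, \Vert p\Vert_Q \ge \beta\, \vert p\vert_Q^2$; third, $y_3 := (v_u;q_u)$ selected by \eqref{braess_condition} and normalized so that $\vertiii{y_3} = \Vert u\Vert_V$ and $a(u,v_u) + b(u,q_u) \ge \gamma\, \Vert u\Vert_V^2$. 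Setting $y := \lambda_1 y_1 + \lambda_2 y_2 + \lambda_3 y_3$ with positive weights and expanding $\mathcal{A}(x,y)$ yields the nonnegative diagonal contributions $\lambda_1[a(u,u)+c(p,p)] + \lambda_2 \beta\, \vert p\vert_Q^2 + \lambda_3 \gamma\, \Vert u\Vert_V^2$ plus the three cross terms $\lambda_2 a(u,v_p)$, $\lambda_3 b(v_u,p)$, and $-\lambda_3 c(p,q_u)$.

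The cross terms are bounded as follows: by Cauchy--Schwarz for $a$, $\vert a(u,v_p)\vert \le \sqrt{\bar{C}_a}\, a(u,u)^{1/2}\, \vert p\vert_Q$; by continuity of $b$ together with $\Vert v_u\Vert_V \le \vertiii{y_3} = \Vert u\Vert_V$, $\vert b(v_u,p)\vert \le \bar{C}_b\, \Vert u\Vert_V\, \Vert p\Vert_Q$; and using $c(q_u,q_u)^{1/2} \le \vertiii{y_3}/t = \Vert u\Vert_V/t$, $\vert c(p,q_u)\vert \le t^{-1}\, c(p,p)^{1/2}\, \Vert u\Vert_V$. Young's inequality then absorbs the $a(u,v_p)$-term into $\lambda_1 a(u,u)$ and $\lambda_2 \beta\, \vert p\vert_Q^2$; the $b(v_u,p)$-term, after using $\Vert p\Vert_Q^2 = \vert p\vert_Q^2 + t^2 c(p,p)$, into $\lambda_3 \gamma\, \Vert u\Vert_V^2$, $\lambda_2 \beta\, \vert p\vert_Q^2$ and $\lambda_1 c(p,p)$; and the $c(p,q_u)$-term into $\lambda_3 \gamma\, \Vert u\Vert_V^2$ and $\lambda_1 c(p,p)$. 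Choosing $\lambda_1 = 1$, $\lambda_2$ of order $\beta/\bar{C}_a$, and $\lambda_3$ small enough in terms of $\beta$, $\gamma$, $\bar{C}_b$ and $t$ makes all constraints consistent and yields $\mathcal{A}(x,y) \gtrsim \Vert u\Vert_V^2 + \vert p\vert_Q^2 + t^2 c(p,p) = \vertiii{x}^2$. A routine triangle-inequality bound on the norms of $y_1,y_2,y_3$ gives $\vertiii{y} \lesssim \vertiii{x}$, whence $\mathcal{A}(x,y)/\vertiii{y} \gtrsim \vertiii{x}$, which is exactly \eqref{inf_sup_A}.

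The delicate point is the first cross term: the naive estimate $\vert a(u,v_p)\vert \le \bar{C}_a\, \Vert u\Vert_V\, \vert p\vert_Q$ would force it to compete for the $\Vert u\Vert_V^2$-budget supplied by $\lambda_3 y_3$, and the resulting lower bound on $\lambda_3$ (needed to absorb the $\vert p\vert_Q$ part of the cross) together with the upper bounds coming from the positivity of the $\vert p\vert_Q^2$-coefficient turn out to be mutually incompatible in general. The sharper bound $\vert a(u,v_p)\vert \le a(u,u)^{1/2}\, a(v_p,v_p)^{1/2}$ instead routes this term into the $a(u,u)$-contribution carried by $\lambda_1 y_1$, decoupling the weight constraints. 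The $t^{-1}$ factor in the $c$-cross-term is absorbed into $\lambda_1 c(p,p)$ at the cost of a $t$-dependent stability constant $\underline{\alpha}$, consistent with the statement.
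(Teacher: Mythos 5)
The paper does not reproduce Braess's proof of this statement; it merely cites \cite{Braess1996stability} and moves on, so there is no in-text proof to compare against. Judged on its own merits, your argument is correct and amounts to a direct Fortin-type construction: the cancellation direction $y_1=(u;-p)$ supplies $a(u,u)+c(p,p)$, the LBB direction $y_2=(v_p;0)$ supplies $\beta\vert p\vert_Q^2$, and the ``big-inf-sup-in-$u$'' direction $y_3=(v_u;q_u)$ coming from \eqref{braess_condition} supplies $\gamma\Vert u\Vert_V^2$; the weights then decouple because you route the $a(u,v_p)$ cross term through the strong Cauchy--Schwarz inequality $\vert a(u,v_p)\vert\le a(u,u)^{1/2}a(v_p,v_p)^{1/2}$ into the $\lambda_1 a(u,u)$ budget rather than the $\lambda_3\gamma\Vert u\Vert_V^2$ budget. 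Your diagnosis of why the naive estimate $\vert a(u,v_p)\vert\le\bar C_a\Vert u\Vert_V\vert p\vert_Q$ fails is accurate: it forces a lower bound $\lambda_3\gtrsim\lambda_2\bar C_a^2/(\beta\gamma)$ simultaneously with the upper bound $\lambda_3\lesssim\lambda_2\beta\gamma/\bar C_b^2$ coming from the $b(v_u,p)$ term, which is incompatible unless $\bar C_a^2\bar C_b^2\lesssim\beta^2\gamma^2$. The routing of the $c(p,q_u)$ term via $c(q_u,q_u)^{1/2}\le\vertiii{y_3}/t$ and of the $b(v_u,p)$ term via $\Vert p\Vert_Q\le\vert p\vert_Q+t\,c(p,p)^{1/2}$ is also sound and correctly produces the stated $t$-dependence.

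Two small points worth being aware of. First, your proof never actually invokes \eqref{a_coerc_ker}: the $\Vert u\Vert_V^2$ content comes entirely from $\lambda_3\gamma\Vert u\Vert_V^2$ via \eqref{braess_condition}. This is not an error --- indeed \eqref{braess_condition} implies \eqref{a_coerc_ker} with $\underline C_a=\gamma^2/\bar C_a$ by restricting to $u\in{\rm Ker}(B)$ --- but it means the explicit constant dependence you obtain is on $\beta,\gamma,\bar C_a,\bar C_b,t$ rather than on $\beta,\underline C_a,\gamma,t$ as literally stated in the theorem; these are mutually translatable, so the discrepancy is harmless, but you should state which set of constants enters your bound. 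Second, it is worth noting how this relates to the proof of Theorem~\ref{our_theorem} in the paper, which is also a Fortin-type argument but uses only two auxiliary directions (the scaled cancellation pair $(\delta u;-\delta p)$ plus $(u_0;p_0)$ with $p_0=\bar Q^{-1}Bu$): the fitted-norm structure $\Vert v\Vert_V^2=\vert v\vert_V^2+\vert v\vert_b^2$ lets the paper extract $\Vert u\Vert_V^2$ from $a(u,u)$ together with $\langle Bu,\bar Q^{-1}Bu\rangle$, precisely dispensing with the third direction $y_3$ that you need here because Braess's theorem does not assume fitted norms. Your proof is thus a valid, self-contained derivation of the cited result and usefully illustrates what the fitted-norm framework of Section~\ref{sec:stability} buys.
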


\begin{remark}
Note that as it can easily be seen if $a(\cdot,\cdot)$ is symmetric positive semidefinite, 
condition~\eqref{braess_condition} is equivalent to the condition that there exists a constant 
$\gamma'>0$ such that
\begin{equation}\label{braess_condition2}
\frac{a(u,u)}{\Vert u\Vert_V} + \sup_{q\in Q} \frac{b(u,q)}{\vert q\vert_Q + t c(q,q)}\ge \gamma' \Vert u \Vert_V \qquad 
\forall u\in V.
\end{equation}
Moreover, as shown in~\cite{Braess1996stability}, then~\eqref{braess_condition2} is also equivalent to 
the condition that there exists a constant $\gamma^{\prime \prime}>0$ such that
\begin{equation}\label{braess_condition3}
\sup_{(v;q)\in Y} \frac{\mathcal{A}((u;0),(v;q))} {\vertiii{(v;q)}} \ge \gamma^{\prime \prime} \Vert u \Vert_V \qquad \forall u\in V.
\end{equation} 
\end{remark}

Since~\eqref{braess_condition} is an inf-sup condtion for $(a(\cdot,\cdot)+b(\cdot,\cdot))$ which can be interpreted as a big 
inf-sup condition on $\mathcal{A}$ for $p=0$ under the specific norm $\Vert \cdot \Vert_Y=\vertiii{\cdot}$, 
see~\eqref{braess_condition3}, Theorem~\ref{braess_theorem} still does not provide us with the desired stability result 
in terms of conditions on $a(\cdot,\cdot)$, $b(\cdot,\cdot)$ and $c(\cdot,\cdot)$ separately. On the other hand, 
Theorem~\ref{brezzi2} requires the solution of the auxiliary problem~\eqref{1.54} on ${\rm Ker}(B^T)$ 
for which one has to verify Assumption~\ref{assumption0} 
which, in some situations, is a difficult task. 

Our aim is to avoid the latter and still impose Brezzi-type conditions, 
in particular a small inf-sup condition on $b(\cdot,\cdot)$. In the next section, we will prove a theorem (Theorem \ref{our_theorem}) which ensures the stability of the perturbed saddle-point problem (12) under conditions which are equivalent to the conditions in Brezzi’s theorem (Theorem \ref{brezzi1}) when the perturbation term vanishes. 
Moreover, our approach provides a framework suited for finding norms in which stability can 
be shown and allows for simplifying and shortening proofs based on the result of Babu\v{s}ka.  
%norm splitting which circumvents these disadvantages.  

\subsection{A new framework for the stability analysis of perturbed saddle-point problems}\label{sec:stability}
%Let the norm on $V$ be denoted by $\Vert \cdot\Vert_V$ and the norm on $Q$ by $\Vert \cdot\Vert_Q$ where 
%we want to make the assumption

The key idea for studying and verifying the stability of perturbed saddle-point problems we follow in this 
paper is to construct proper norms as part of an abstract framework which applies to a variational formulation of 
various multiphysics models. As we have already observed in the previous subsection, a norm-splitting 
of the form~\eqref{q_splitting} is quite natural if the symmetric positive semidefinite perturbation form 
$c(\cdot,\cdot)$ is not identical to zero. For fixed $t>0$, the norm defined in~\eqref{q_splitting} is 
equivalent to the norm defined by 
\begin{equation} \label{Q_norm}
\Vert q\Vert_Q^2 :=\vert q\vert_Q^2+c(q,q) =: \langle \bar{Q}q,q \rangle_{Q'\times Q}.
\end{equation}
Note that the assumption that $\Vert \cdot \Vert_Q$ is a full norm induced by an inner product 
under which $Q$ is a Hilbert space implies that the seminorm $\vert \cdot \vert_Q$ corresponds to an SPSD 
bilinear form $d(\cdot,\cdot):Q\times Q \rightarrow \mathbb{R}$, i.e., $\vert q\vert_Q^2=d(q,q)$. Consequently, the form 
$c(p,q)+d(p,q)$ is symmetric positive definite (SPD) and defines a linear operator $\bar{Q}: Q\rightarrow Q'$ by $\langle \bar{Q}p,q\rangle : = c(p,q)+d(p,q)$.
%where $\bar{Q}: Q\rightarrow Q'$ is a linear operator, which follows from the fact that the norm $\Vert \cdot\Vert_{Q}$ 
%is induced by the scalar product $(\cdot,\cdot)_Q$ on the Hilbert space $Q$. 

Now we introduce the following splitting of the norm $\Vert \cdot \Vert_V$ defined by
\begin{align}\label{V_norm}
\Vert v \Vert^2_{V} &:= \vert v\vert^2_{V} +\vert v \vert^2_{b}
\end{align}
where $\vert \cdot \vert_V$ is a proper seminorm, which is a norm on ${\rm Ker}(B)$ satisfying  
$$
\vert v\vert_V^2 \eqsim  a(v,v) \qquad \forall v \in {\rm Ker}(B)
$$ 
and $\vert \cdot\vert_b$ is defined by 
\begin{align}\label{seminorm_b}
%\vert v \vert^2_{V} &= a(v,v),\qquad 
\vert v \vert^2_{b}:= \langle Bv, \bar{Q}^{-1}Bv \rangle_{Q' \times Q} = 
\Vert Bv \Vert_{Q'}^2.  
% \Vert\bar{Q}^{-\frac{1}{2}}B\bu\Vert^2
\end{align}
%for another norm $\vert \cdot\vert_Q$ on $Q$.
Here, $\bar{Q}^{-1}:Q'\rightarrow Q$ is an isometric isomorphism (Riesz isomorphism) since $\bar{Q}$ is an 
isometric isomorphism, i.e.,   
\begin{align*}
\Vert Bv\Vert_{Q'}^2& = \Vert \bar{Q}^{-1}Bv\Vert_Q^2 = (\bar{Q}^{-1}Bv,\bar{Q}^{-1}Bv)_Q 
%\\&
= \langle \bar{Q}\bar{Q}^{-1}Bv,\bar{Q}^{-1}Bv \rangle_{Q'\times Q} 
= \langle Bv, \bar{Q}^{-1}Bv\rangle_{Q'\times Q}.
\end{align*}
%defined via the relation 
%$$
%(\underbrace{\bar{Q}^{-1}\bar{Q}}_{\in Q},q)_Q = \bar{Q}(q) = \langle \bar{Q},q\rangle_{Q'\times Q} 
%=\Vert q\Vert_Q = (q,q)_Q^{1/2} \qquad \forall q \in Q.
%$$
%From the latter we conclude 

\begin{remark}
Note that both $\vert \cdot \vert_V$ and $\vert \cdot \vert_b$ can be seminorms as long as they add up to 
a full norm. 
Likewise, only the sum of the seminorms $\vert \cdot\vert_Q$ and $c(\cdot,\cdot)$ has to define a norm. 
In some particular situations, it is also useful to identify certain of the involved seminorms with 
$0$, in which case the corresponding splitting becomes a trivial splitting. 
The splitting~\eqref{V_norm} is closely related to a Schur complement type operator, corresponding to the 
modified (regularized) bilinear form resulting from $\mathcal{A}((\cdot;\cdot),(\cdot;\cdot))$ by replacing 
$c(\cdot,\cdot)$ with $(\cdot,\cdot)_Q$. 
\end{remark}

In order to present our main theoretical result, we give the following definition. 

\begin{definition}\label{A1}
{
For two Hilbert spaces $V$ and $Q$, a norm $\Vert \cdot\Vert_V$ on $V$ and a norm $\Vert \cdot\Vert_Q$ on $Q$ 
are called fitted}
%Two norms { $\Vert \cdot\Vert_{Q}$} and { $\Vert \cdot \Vert_{V}$} on the Hilbert spaces $Q$ and $V$ are called fitted 
if they satisfy the splittings~\eqref{Q_norm} and~\eqref{V_norm}, respectively, where $\vert \cdot\vert_Q$ 
is a seminorm on~$Q$ and $\vert \cdot \vert_V$ and $\vert \cdot\vert_b$ are seminorms on $V,$ 
the latter defined according to~\eqref{seminorm_b}. 
\end{definition}
%Further, we want to assume that 
%\begin{equation}\label{A1}\tag{A1}
%a(u,v)\le C_a \Vert u\Vert_{{\bV_1}} \Vert v\Vert_{{\bV_1}}, \qquad \forall u,v\in V,
%\end{equation}
%{ es gilt auch $$ a(u,u)\ge C_c \Vert u\Vert_{\bV} ^2, \qquad \forall u\in \hbox{Ker} B $$\\
%das heiÃt (A2) entspricht (2.2a) und (A3) entspricht (2.2b) bei Mardal. Das heiÃt wir brauchen nicht (2.4) bei Mardal}

\begin{remark}\label{remark_alternative}
{
Note that the norm fitting can also be performed by first fixing the full norm on $V$ (instead of the full norm on $Q$ as described above). 
Exploiting the structure of the problem, in the latter case one uses the following norm splittings
\begin{equation}\nonumber
\begin{array}{l}
\Vert v \Vert_V : =  \vert v \vert_V^2 + a(v,v)=:\langle \bar{V}v,v \rangle_{V'\times V},\\
\Vert q \Vert_Q : =  \vert q \vert_Q^2 + \vert q\vert_b^2,
\end{array}
\end{equation}
where $\bar{V}:V\rightarrow V'$ is a linear operator, $\vert q\vert_Q^2$ is equivalent to $c(q,q)$ and $\vert q\vert_b^2=:\langle B^T q, \bar{V}^{-1}B^Tq \rangle_{V'\times V}$.}
\end{remark}
\begin{theorem}\label{our_theorem}
Let $\Vert \cdot\Vert_V$ and $\Vert \cdot \Vert_Q$ be {fitted} norms according to Definition~\ref{A1}, 
which immediately implies the continuity of $b(\cdot,\cdot)$ and $c(\cdot,\cdot)$
in these norms with $\bar{C}_b=1$ and
$\bar{C}_c=1$, cf.~\eqref{b_cont}--\eqref{c_cont}.
Consider the bilinear form $\mathcal{A}((\cdot;\cdot),(\cdot;\cdot))$ 
defined in~\eqref{bilinear_form} where $a(\cdot,\cdot)$ is continuous,
i.e., \eqref{a_cont} holds, and $a(\cdot,\cdot)$ and 
$c(\cdot,\cdot)$ are symmetric positive semidefinite. 
Assume, further, that $a(\cdot,\cdot)$ satisfies the coercivity estimate
\begin{equation}\label{coerc_a}
a(v,v)\ge \underline{C}_a \vert v\vert_V^2, \qquad \forall v\in V,
\end{equation}
% is coercive on ${\rm Ker}(B)$, i.e.,  
%condition~\eqref{a_coerc_ker} holds, 
and that there exists a constant 
$\underline{\beta}>0$ such that 
\begin{equation}\label{inf_sup_b}
\sup_{\substack{v\in V\\ v\neq 0}} 
\frac{b(v,q)}{\Vert v\Vert_V}\ge \underline{\beta}  \vert q\vert_Q, \qquad \forall q\in Q.
\end{equation} 
Then the bilinear form $\mathcal{A}((\cdot;\cdot),(\cdot;\cdot))$ is continuous and inf-sup stable under the combined norm 
$\Vert \cdot \Vert_Y$ defined in~\eqref{y_norm}, i.e., the conditions~\eqref{boundedness_A} and \eqref{inf_sup_A} hold.
\end{theorem}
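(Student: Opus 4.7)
The goal is to verify Babu\v{s}ka's two conditions \eqref{boundedness_A} and \eqref{inf_sup_A} directly under the combined product norm \eqref{y_norm}. Continuity is immediate: since the fitting yields $\bar{C}_b=\bar{C}_c=1$ as noted in the theorem, combining with \eqref{a_cont} gives \eqref{boundedness_A} with $\bar{C}=\bar{C}_a+2$. The real work is the lower inf-sup \eqref{inf_sup_A}.

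Given $x=(u;p)\in Y$, I would exhibit a single test function
\[
y:=(u+\eta v_p;\,-p+\delta q_u),
\]
with small positive constants $\eta,\delta$ to be chosen at the end. Here $q_u:=\bar{Q}^{-1}Bu\in Q$ is the Riesz representative of $Bu$, which by \eqref{seminorm_b} satisfies $\|q_u\|_Q=|u|_b$ and $b(u,q_u)=|u|_b^2$; while $v_p\in V$ is supplied by the small inf-sup \eqref{inf_sup_b}, normalized (up to an arbitrarily small $\varepsilon$ in the standard approximation of the supremum) so that $\|v_p\|_V=|p|_Q$ and $b(v_p,p)\ge(\underline{\beta}-\varepsilon)|p|_Q^2$. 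Expanding, the contributions $b(u,p)$ and $-b(u,p)$ cancel, leaving
\[
\mathcal{A}(x,y)=a(u,u)+c(p,p)+\eta\bigl(a(u,v_p)+b(v_p,p)\bigr)+\delta\bigl(|u|_b^2-c(p,q_u)\bigr).
\]

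The central step is to bound this from below by the four squared seminorms $|u|_V^2$, $|u|_b^2$, $|p|_Q^2$, $c(p,p)$. The mixed term $\delta c(p,q_u)$ is routine: Cauchy--Schwarz for the semi-inner product $c$ together with $c(q_u,q_u)\le\|q_u\|_Q^2=|u|_b^2$ and Young's inequality gives $\delta|c(p,q_u)|\le\tfrac{\delta}{2}|u|_b^2+\tfrac{\delta}{2}c(p,p)$. The genuinely delicate term is $\eta a(u,v_p)$: estimating it crudely by $\eta\bar{C}_a\|u\|_V\|v_p\|_V$ would force an artificial condition $\underline{\beta}>\bar{C}_a$ at the parameter-selection stage. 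To avoid this I would invoke the Cauchy--Schwarz inequality for the symmetric positive semidefinite form $a$, i.e.\ $|a(u,v_p)|\le a(u,u)^{1/2}a(v_p,v_p)^{1/2}\le\bar{C}_a^{1/2}a(u,u)^{1/2}|p|_Q$, and then Young's inequality to absorb $\tfrac12 a(u,u)$ into the coercivity term, leaving only a residual of order $\tfrac{\eta^2\bar{C}_a}{2}|p|_Q^2$ on the pressure side. This is the step I expect to be the main obstacle.

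Collecting everything with the coercivity \eqref{coerc_a} yields
\[
\mathcal{A}(x,y)\;\ge\;\tfrac{\underline{C}_a}{2}|u|_V^2+\tfrac{\delta}{2}|u|_b^2+\bigl(\eta(\underline{\beta}-\varepsilon)-\tfrac{\eta^2\bar{C}_a}{2}\bigr)|p|_Q^2+\bigl(1-\tfrac{\delta}{2}\bigr)c(p,p).
\]
Taking, e.g., $\eta:=\underline{\beta}/\bar{C}_a$, $\delta:=1$, and letting $\varepsilon\to 0$ makes each of the four coefficients strictly positive and depending only on $\underline{C}_a,\underline{\beta},\bar{C}_a$, so $\mathcal{A}(x,y)\ge c_0\|x\|_Y^2$. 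Finally, a triangle inequality together with $\|v_p\|_V=|p|_Q$ and $\|q_u\|_Q=|u|_b$ delivers $\|y\|_Y\le C_1\|x\|_Y$ for a constant $C_1$ depending only on $\eta,\delta$; dividing yields \eqref{inf_sup_A} with $\underline{\alpha}=c_0/C_1$.
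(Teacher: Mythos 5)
Your proposal is correct and follows essentially the same route as the paper: test with $(u;p)$ perturbed by the inf-sup element in $V$ and by $\bar{Q}^{-1}Bu$ in $Q$, cancel the $\pm b(u,p)$ terms, and use Cauchy--Schwarz for the semi-inner products $a$ and $c$ together with Young's inequality to absorb the cross terms. The only cosmetic deviations are that you carry two scaling parameters $\eta,\delta$ where the paper uses a single $\delta$, and you estimate the $c$-cross term via $|c(p,q_u)|\le c(p,p)^{1/2}|u|_b$ (slightly sharper than the paper's bound by $\tfrac12\|p\|_Q^2+\tfrac12|u|_b^2$); neither changes the structure of the argument.
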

{
Before presenting the proof of Theorem~\ref{our_theorem}, we show an auxiliary result and make some remarks. 
\begin{lemma} \label{equi:inf-sup}
The inf-sup condition: 
there exists a constant 
$\underline{\beta}>0$ such that 
\begin{equation}\label{inf_sup_b0}
%\sup_{\substack{v\in V\\ v\neq 0}} \frac{b(v,q)}{\Vert v\Vert_V  \vert q\vert_Q}\ge \underline{\beta} ~~~~~~~~~~~~~~~~~~~~\qquad\qquad
  \sup_{\substack{v\in V\\ v\neq 0}} \frac{b(v,q)}{\Vert v\Vert_V}\ge \underline{\beta}  \vert q\vert_Q, \qquad \forall q\in Q,
\end{equation} 
is equivalent to the condition: for any $q \in Q$, there exists $v \in V$, such that
\begin{equation}\label{infsup:equi}
b(v, q)=\vert q\vert_Q^{2}\quad\text{and}\quad\Vert v\Vert_V \leq   \underline{\beta}^{-1}\vert q\vert_Q.
\end{equation}
\end{lemma}
\begin{proof}
%First,  we consider the case $\vert q\vert_Q=0$.  It is obvious that both \eqref{inf_sup_b0} and \eqref{infsup:equi} are trivial true. 
%Next we consider the case $\vert q\vert_Q\neq 0$. 
Obviously, \eqref{infsup:equi} implies \eqref{inf_sup_b0}. Hence, it remains to prove that \eqref{inf_sup_b0} implies \eqref{infsup:equi}. 
Let $K_Q=\{q\in Q: \vert q\vert_Q= 0 \}$ and define $\displaystyle \Vert q\Vert_{Q/K_Q}:=\vert q\vert_Q$ which is a norm on the quotient space $Q/K_Q$.
For any $q \in Q$, where the class in $Q/K_Q$ which $q$ belongs to is also denoted by $q$, there exists $f \in (Q/K_Q)^{\prime}$ s.t. $f(q)=\Vert q\Vert_{Q/K_Q}^{2}$ and $\Vert f\Vert_{(Q/K_Q)'}=\Vert q\Vert_{Q/K_Q}$. Since $B$ is onto, we can find $v$ s.t. $B v=f$ and by the open mapping theorem, we can find $v$ with $\Vert v\Vert_V \leq  \underline{\beta}^{-1}\Vert f\Vert_{(Q/K_Q)'} =\underline{\beta}^{-1}\Vert q\Vert_{Q/K_Q}=\underline{\beta}^{-1}\vert q\vert_Q$ and $b(v, q)=\langle B v, q\rangle=f(q)=\Vert q\Vert_{Q/K_Q}^{2}=\vert q\vert_Q^2$.
\end{proof}}
\begin{remark}
The continuity of $b(\cdot,\cdot)$ readily follows from
$$
b(v,q)=\langle B v, q \rangle_{Q' \times Q} = \langle \bar{Q} \bar{Q}^{-1} B v, q \rangle_{Q' \times Q}
=(\bar{Q}^{-1} B v, q)_Q \le \Vert \bar{Q}^{-1} B v \Vert_Q \Vert q \Vert_Q \le \Vert v \Vert_V \Vert q \Vert_Q.
$$
If $\vert \cdot \vert_V$ is induced by the bilinear form $a(\cdot,\cdot)$ then the continuity of $a(\cdot,\cdot)$
also follows directly from the definition of the fitted norms.
\end{remark}
\begin{remark}
Theorem~\ref{our_theorem} is a generalization of Theorem~\ref{brezzi1} in the sense that given two norms
$\Vert \cdot \Vert_{Q,{\rm eqv}}$ and $\Vert \cdot \Vert_{V,{\rm eqv}}$ under which the conditions of Theorem~\ref{brezzi1}
are satisfied, one can always find two fitted equivalent norms $\Vert \cdot \Vert_{Q} \eqsim \Vert \cdot \Vert_{Q,{\rm eqv}}$
and $\Vert \cdot \Vert_{V} \eqsim \Vert \cdot \Vert_{V,{\rm eqv}}$ such
that the conditions of Theorem~\ref{our_theorem} are satisfied in these fitted norms when $c(\cdot,\cdot) \equiv 0$.

More specifically, for $c(\cdot,\cdot) \equiv 0$, we have
$\vert \cdot \vert_{Q}=\vert| \cdot \vert|_Q=\Vert q\Vert_{Q,{\rm eqv}}$
and $\bar Q=I$. 
If we define the fitted norm $\Vert \cdot \Vert_{V}$ by choosing
\begin{align}
|v|^2_V=a(v,v),
\end{align}
then \eqref{coerc_a} obviously holds. In addition, there exits a constant $\alpha_0$ such that
(see~\cite[Proposition~4.3.4]{Boffi2013mixed})
\begin{align}
\alpha_0 \Vert v\Vert^2_{V,{\rm eqv}}\le a(v,v)+\Vert Bv\Vert^2_{Q'}=\Vert v\Vert^2_{V}.
\end{align}
At the same time, under the conditions of Theorem~\ref{brezzi1}, the continuity of $a(\cdot, \cdot)$ and
$b(\cdot, \cdot)$ in the norms $\Vert \cdot\Vert_{V,{\rm eqv}}$ and $\Vert \cdot\Vert_{Q,{\rm eqv}}$, we have 
\begin{align}
\Vert v\Vert^2_{V}=a(v,v)+\Vert Bv\Vert^2_{Q'}\le C \Vert v\Vert^2_{V,{\rm eqv}}.
\end{align}
Thus, the fitted norm $\Vert\cdot\Vert_V$ is equivalent to the norm $\Vert\cdot\Vert_{V,{\rm eqv}}$,
and \eqref{inf_sup_b} is induced by~\eqref{small_inf_sup}.
\end{remark}
\begin{remark}
Note that under the conditions of the theorem
the coercivity of $a(\cdot,\cdot)$ on  ${\rm Ker}(B)$ in the (semi-) norms $\vert\cdot\vert_V$ and $\Vert\cdot\Vert_V$
are equivalent since $\vert v\vert_V=\Vert v\Vert_V$ for all $v\in {\rm Ker}(B)$. 
%fitted seminorm $\vert \cdot \vert_V$. 
%whereas \eqref{a_coerc_ker} does not imply~\eqref{coerc_a}. 
The inf-sup condition~\eqref{inf_sup_b}, however, uses
the seminorm $\vert \cdot \vert_Q$ instead of $\Vert \cdot \Vert_Q$ 
as in Brezzi's condition~\eqref{small_inf_sup}.
\end{remark}

\begin{proof}[Proof of Theorem~\ref{our_theorem}]
Demonstrating~\eqref{boundedness_A} is straightforward since 
\begin{subequations}
\allowdisplaybreaks
\begin{align*}
\mathcal{A}((\w;\r),(v;q)) =& 
a(\w,v)+b(v,\r)+b(\w,q)-c(\r,q) \nonumber \\
\le & \bar{C}_a \Vert \w\Vert_{V}\Vert v\Vert_{V} +\Vert v\Vert_V \Vert \r\Vert_Q  
+\Vert \w\Vert_V \Vert q\Vert_Q+ \Vert \r\Vert_Q \Vert q\Vert_Q \nonumber \\
\le & \bar{C}(\Vert \w\Vert_V+\Vert \r\Vert_Q)(\Vert v\Vert_V+\Vert q\Vert_Q)
\le 2 \bar{C} \Vert(\w;\r)\Vert_Y \Vert(v;q)\Vert_Y
\nonumber 
\end{align*}
\end{subequations}
with $\bar{C}:=\max\{\bar{C}_a,1\}$.

In order to prove~\eqref{inf_sup_A} for a positive constant $\delta$, which will be selected later, 
{and for a given arbitrary pair $(\w,\r)\in V\times Q$}, we choose  
\begin{equation}\label{v}
v :=\delta \w+\w_0
\end{equation}
where, {by Lemma \ref{equi:inf-sup}, $\w_0\in V$ can be chosen} such that 
\begin{subequations}\label{u0}
\begin{align}
b(\w_0,\r) =&\vert \r\vert_Q^2 \label{u0_1}, \\
\Vert \w_0\Vert_V \le& \underline{\beta}^{-1} \vert \r\vert_Q \label{u0_2},
\end{align}
\end{subequations}
and
\begin{equation}\label{q}
q  :=  -\delta \r  + \r_0 
\end{equation}
where 
\begin{equation}\label{p0}
\r_0:= \bar{Q}^{-1}B \w.
\end{equation}
Note that the existence of an element $\w_0$ satisfying~\eqref{u0} follows from~\eqref{inf_sup_b}.

Then we have
\begin{subequations}
\allowdisplaybreaks
\begin{align*}
\Vert v\Vert_V \le &
\Vert \delta \w\Vert_V +\Vert \w_0\Vert_V %\nonumber \\
\le %&
\delta \Vert \w\Vert_V +\underline{\beta}^{-1}\vert \r\vert_Q %\\
\le %&
\delta \Vert \w\Vert_V +\underline{\beta}^{-1}\Vert \r\Vert_Q, \\
\Vert q\Vert_Q \le & \delta \Vert \r \Vert_Q + \Vert \r_0\Vert_Q = \delta \Vert \r\Vert_Q + (\bar{Q}^{-1}B\w,\bar{Q}^{-1}B\w)_Q^{1/2} = \delta \Vert \r\Vert_Q + \vert \w \vert_{b},
\nonumber 
\end{align*}
\end{subequations}
and, consequently, 
\begin{subequations}
\allowdisplaybreaks
\begin{align*}
\Vert (v;q)\Vert^2_Y = & \Vert v\Vert_V^2 +\Vert q\Vert_Q^2 
\le  2(\delta^2 +1)\Vert \w \Vert_V^2 +2(\underline{\beta}^{-2}+\delta^2)\Vert \r\Vert_Q^2.
\nonumber 
\end{align*}
\end{subequations}
Hence, it follows that
\begin{equation}\label{bound_est}
\Vert(v;q)\Vert_Y \le \left(2\max\{(\delta^2 +1),(\underline{\beta}^{-2}+\delta^2)\}\right)^{\frac{1}{2}} \Vert (\w;\r)\Vert_Y.
\end{equation}
 
Moreover, for the same choice of $v$ and $q$, we obtain
\begin{align*}
\allowdisplaybreaks
%\begin{align}
\mathcal{A}((\w;\r),(v;q)) & =  a(\w,\delta \w+\w_0)+b(\delta \w+\w_0,\r)-b(\w,\delta \r -\r_0)+c(\r,\delta \r -\r_0) 
 \\
& \ge  \delta a(\w,\w)+a(\w,\w_0)+\delta b(\w,\r)+b(\w_0,\r)-\delta b(\w,\r)+\delta c(\r,\r) 
\\
&+ \langle B\w, \bar{Q}^{-1}B\w \rangle_{Q'\times Q}-
c(\r, \bar{Q}^{-1}B\w)
\\
& \ge  \delta a(\w,\w) -\frac{1}{2} \epsilon^{-1}a(\w,\w)-\frac{1}{2}\epsilon a(\w_0,\w_0) +\vert \r\vert_Q^2
+\delta c(\r,\r)
\\
&+ \vert \w\vert^2_b-\frac{1}{2}c(\r, \r)
-\frac{1}{2}c( \bar{Q}^{-1}B\w,\bar{Q}^{-1}B\w)
\\
& \ge  \left(\delta-\frac{1}{2} \epsilon^{-1} \right) a(\w,\w) -\frac{1}{2}\epsilon \bar{C}_a \underline{\beta}^{-2} \vert \r\vert_Q^2 
+\vert \r\vert_Q^2 +\delta c(\r,\r)+{ \vert \w\vert^2_b -\frac{1}{2}c(\r,\r)-\frac{1}{2}\vert \w \vert^2_b}  
\\
& \ge  \left(\delta-\frac{1}{2} \epsilon^{-1} \right) \underline{C}_a \vert \w\vert_V^2 +\left(1-\frac{1}{2}\epsilon \bar{C}_a 
\underline{\beta}^{-2}\right)
\vert \r\vert_Q^2 +\delta c(\r,\r)  -\frac{1}{2}c(\r,\r) +{ \frac{1}{2}\vert \w\vert_b^2 }
%\end{align}
\end{align*}
and, hence, for $\epsilon = \frac{1}{2}\bar{C}_a^{-1}\underline{\beta}^2$ and {$\delta=\max\{ \frac14\underline{C}_a^{-1}+\bar{C}_a\underline{\beta}^{-2}, \frac34\}$, we have }
\begin{align}
\allowdisplaybreaks
%\begin{align}
\mathcal{A}((\w;\r),(v;q)) & \ge 
\left(\delta - { \bar{C}_a} \underline{\beta}^{-2}\right) \underline{C}_a \vert \w\vert_V^2 
+{ \frac{1}{4}} \vert \r\vert_Q^2 +{ \left(\delta-\frac{1}{2}\right)} c(\r,\r) +{ \frac{1}{2}\vert \w\vert_{b}^2 }
\nonumber 
\\
& \ge 
%{\frac{1}{4}}\left(\Vert u\Vert_V^2 +\vert p\vert_Q^2 +c(p,p) \right)  = 
{\frac{1}{4}}\left(
\Vert \w\Vert_V^2 +\Vert \r\Vert_Q^2 \right) = {\frac{1}{4}} \Vert(\w;\r)\Vert_Y^2
\label{coerc_est}.
%\end{align}
\end{align}
Together, ~\eqref{bound_est} and~\eqref{coerc_est} imply the inf-sup 
condition~\eqref{inf_sup_A} which can equivalently be formulated as 
\begin{equation}\label{inf_sup_A_equiv}
\sup_{(v;q)\in Y} \frac{\mathcal{A}((\w;\r),(v;q))}{\Vert (v;q)\Vert_Y }\ge  \underline{\alpha}
\Vert (\w;\r)\Vert_Y \qquad \forall (\w;\r)\in Y
\end{equation}
because the supremum on the left-hand side of~\eqref{inf_sup_A_equiv} is bounded 
from below by 
$$
\frac{\mathcal{A}((\w;\r),(v;q))}{\Vert(v;q)\Vert_Y}
$$
if we insert any fixed $(v;q)$, in particular the choice we made and for which we proved
$$
\frac{\mathcal{A}((\w;\r),(v;q))}{\Vert(v;q)\Vert_Y} \ge \frac{\frac{1}{4} \Vert (\w;\r)\Vert_Y^2}
{(2\max \{(\delta^2+1),(\underline{\beta}^{-2}+\delta^2)\})^{1/2}\Vert (\w;\r)\Vert_Y}.
$$
\end{proof}

\begin{remark}
The statement of Theorem~\ref{our_theorem} remains valid if the norms $\Vert \cdot \Vert_Q$ and $\Vert \cdot \Vert_V$,
as defined in~\eqref{Q_norm} and~\eqref{V_norm}, are replaced with equivalent norms $\Vert \cdot\Vert_{Q,{\rm eqv}} \eqsim \Vert \cdot\Vert_Q$ and
$\Vert \cdot \Vert_{V,{\rm eqv}} \eqsim \Vert \cdot \Vert_V$, hence using $\Vert \cdot \Vert_{V,{\rm eqv}}$ in~\eqref{inf_sup_b} in this case.
The proof remains unchanged and the only difference in the final result is that the inf-sup constant $\underline{\alpha}$ in \eqref{inf_sup_A_equiv}
with respect to the (new) equivalent combined norm has to be scaled by the quotient of the constants in the norm equivalence
relation for the combined norms. For that reason, without loss of generality, we can use the fitted norms defined by~\eqref{Q_norm}
and~\eqref{V_norm} directly in the formulation of the theorem.
\end{remark}

\begin{remark}
{
An advantage of the new framework is that 
Theorem~\ref{our_theorem} provides sufficient conditions that are easy to verify in practice. These are 
given in form of an LBB-type condition on the basis of the proposed norm fitting technique, which allows, contrary to the technique 
in~\cite{Zulehner2011nonstandard}, to choose the norms subsequently, see also~Remark~\ref{remark_alternative}.
}
\end{remark}

\section{Applications of the framework}\label{sec:applications}

In this section four different classes of problems are analyzed by means of the proposed framework demonstrating its versatility 
and ease of application. 
%
%We consider mixed variational problems arising in the context of modeling flow in and 
%deformation of porous media. All examples except the { first two} are based on different formulations of 
%Biot's consolidation model. 
Here, we use bold letters to denote  
vector-valued functions and the spaces to which they belong which means that we identify certain non-bold symbols from the abstract 
framework in the previous section with bold symbols, e.g., $v = \bv$.
To prove stability of 
%these 
the exemplified mixed variational 
formulations, we assume that 
proper boundary conditions are imposed.
%and consistency requirements for the right-hand sides are met
In certain cases, we will also make use of the following classical inf-sup conditions, see~\cite{Brezzi1974},  
also \cite{Brezzi1991mixed,Boffi2013mixed}, for the pairs of spaces 
$(\bV,Q)$:  
%hold,
%$(\hat{H}({\rm div},\Omega),\hat{L}^2(\Omega))$ and 
%$(\hat{H}^1(\Omega),\hat{L}^2(\Omega))$ hold, 
%i.e., 
there exist %positive 
constants $\beta_d$ and $\beta_s$ such that
%
%there exists a constant $\beta_s > 0$ such that
\begin{align}\label{beta_d}
	\inf_{q\in Q}
\sup_{\bv \in \bV}
\frac{({\rm div} \bv,q)}{\Vert \bv\Vert_{\rm div} \Vert q\Vert}
\geq \beta_d  > 0 ,
	\end{align}
%
%and
%There exists a constant $\beta_d > 0$ such that 
\begin{align}\label{beta_s}
	\inf_{q\in Q}
\sup_{\bv \in \bV}
\frac{({\rm div} \bv,q)}{\Vert \bv\Vert_1 \Vert q\Vert}
\geq \beta_s  > 0,
	\end{align}
where the norms $\Vert \cdot \Vert_{\rm div}$, $\Vert \cdot\Vert_1$ and $\Vert \cdot \Vert$ denote the 
standard $\boldsymbol H({\rm div})$, $\boldsymbol H^1$ and $L^2$ norms and $(\cdot,\cdot)$ is the $L^2$-inner product. 

%The hat symbol indicates that we are using certain restrictions of the respective Hilbert spaces, e.g., 
%\begin{align*}
%\hat{\bH}({\rm div},\Omega) & := \{\bv \in \bH({\rm div},\Omega): \bv\cdot \bn=0 \text{ on }\Gamma_N\}, \\
%\hat{\bH}^1(\Omega) & := \{\bv \in \bH^1(\Omega): \bv=0 \text{ on }\Gamma_D\}, \\
%\hat{H}^1(\Omega) & := \{q \in H^1(\Omega): q=0 \text{ on }\Gamma_D\}, \\
%\hat{L}^2(\Omega) & := \{q \in L^2(\Omega): \int_{\Omega}q {~\rm d}x =0 \},\quad 
%\hat{L}^2(\Omega) := L^2(\Omega),
%\end{align*}	
%where $\Gamma_N$ and $\Gamma_D$ provide a partitioning of the boundary and define parts on which 
%Neumann or Dirichlet boundary conditions are satisfied. 

\subsection{Generalized Poisson and generalized Stokes equations}
\begin{example}\label{example1}
The first example, see~\cite{boon2020robust}, is the following mixed variational problem resulting from a weak formulation of a 
generalized Poisson equation: find $(\bu,p)\in \bH({\rm div},\Omega)\times L^2(\Omega)$ such that 
\begin{subequations}
\begin{align*}
(\bu,\bv)+(p,{\rm div}\bv)& = 0,  \qquad \forall \bv\in \bH({\rm div},\Omega), \\
({\rm div} \bu,q)-t (p,q) & = -(f,q),\qquad \forall q\in L^2(\Omega),
\end{align*}
\end{subequations}
where $t\ge 0$ is a parameter. 

The bilinear forms {generating} $\mathcal{A}((\cdot ;\cdot),(\cdot ;\cdot))$ 
are given by 
$$
a(\bu,\bv):= (\bu,\bv),\quad b(\bv,q):= ({\rm div}\bv,q), \quad c(p,q)=t(p,q), 
\quad \forall \bu,\bv \in \bV, \forall p,q\in Q,
$$
where
$Q:=L^2(\Omega)$, $\bV:=\bH({\rm div},\Omega)$. Using the norm fitting technique, we define $\vert \cdot \vert_Q$, 
$\vert \cdot \vert_V$ by
\begin{equation*}
\vert q\vert_Q^2 := (q,q) \quad \forall q\in Q \quad \text{and} \quad 
\vert \bv \vert_V^2 :=(\bv,\bv) \qquad \forall \bv \in \bV.
\end{equation*}
Obviously, $B:={\rm div}: \bV\rightarrow Q'$, and~\eqref{Q_norm} and 
\eqref{V_norm} take the form
\begin{equation*}
\Vert q\Vert_Q^2 = \vert q\vert_Q^2 + c(q,q)=(q,q)+t(q,q)=((1+t)q,q)=\langle (1+t)I q,q\rangle_{Q'\times Q},
\end{equation*}
\begin{equation*}
\Vert \bv\Vert_V^2 = \vert \bv\vert_V^2+\vert \bv\vert_b^2 = (\bv,\bv)
+ \langle {\rm div} \bv, \frac{1}{(1+t)}I {\rm div} \bv\rangle_{Q' \times Q} 
= (\bv,\bv) + \frac{1}{(1+t)}({\rm div} \bv, {\rm div}\bv),
\end{equation*}
respectively. 
Since $a(\bv,\bv)=(\bv,\bv)=\vert \bv\vert_V^2$ for all $\bv\in \bV$, condition 
\eqref{coerc_a} in Theorem~\ref{our_theorem} is satisfied with $\underline{C}_a=1$. 

Finally, we have to verify condition~\eqref{inf_sup_b} in Theorem~\ref{our_theorem}, namely
\begin{equation*}
\sup_{\bv \in V} \frac{({\rm div}\bv,q)}{(\Vert \bv\Vert^2+\frac{1}{(1+t)} \Vert {\rm div} \bv\Vert^2)^{1/2}}
\ge \underline{\beta} \vert q\vert_Q = : \underline{\beta}\Vert q\Vert, \qquad \forall q\in Q
\end{equation*}
which follows directly from the classical inf-sup condition~\eqref{beta_d} on the spaces $(\bV,Q)$ { since $t \ge 0$}.
%cob by noting that $t\ge 0$}.

As a result, we obtain that the preconditioner
\begin{align*}
\mathcal{B} := \begin{bmatrix}
(I-(1+t)^{-1} \nabla \rm div )^{-1} & \\
& ((1+t)I)^{-1}
\end{bmatrix}
\end{align*}
is norm-equivalent for the combined norm, cf.~\cite{mardal2011preconditioning}. To solve the $\bH(\rm div)$ subproblem, one can use various 
preconditioners, see, e.g.,~\cite{Hiptmair.R;Xu.J2007a,kraus2016preconditioning}, multigrid, see, e.g.,~\cite{vassilevski1996preconditioning,arnold1997preconditioning}, 
and domain decomposition methods~\cite{Toselli2005domain}.

\begin{remark}
Note that in this example as well as the ones which follow, the perturbation term $c(\cdot,\cdot)$, due to the presence of various parameters, 
can dominate the problem. In this situation stability often cannot be proven using Theorem~\ref{brezzi2} because  
$\Vert q \Vert_Q$ has to bound $c(q,q)^{\frac{1}{2}}$ which for dominating perturbation conflicts satisfying the classical LBB condition~\eqref{small_inf_sup} uniformly. 
A way to overcome this problem is to work either with the Braess inf-sup condition, see Theorem~\ref{braess_theorem}, 
or with the Babu\v{s}ka inf-sup condition, see Theorem~\ref{thm:1}, 
or, alternatively use the simpler to verify inf-sup condition provided in Theorem~\ref{our_theorem}.
\end{remark}

\end{example} 

\begin{example}\label{example2}
The second example we consider is taken from~\cite{lee2017parameter}. 
Its mixed variational formulation reads: find $(\bu,p)\in \bH_0^1(\Omega)\times (H^1(\Omega) \cap L^2_0(\Omega))$ such that 
%weak formulation of 
%the two-field formulation of the quasi-static Biot's consolidation model  
%after semidiscretization in time with the implicit Euler method  
\begin{subequations}
\begin{align*}
(\nabla \bu, \nabla \bv)-(p,{\rm div}\bv)& = (\bbf,\bv),  \qquad \forall \bv\in \bH_0^1(\Omega), \\
-({\rm div} \bu,q)- (\kappa \nabla p,\nabla q) & = (g,q),\qquad \forall q\in H^1(\Omega) \cap L^2_0(\Omega), 
\end{align*}
\end{subequations}
where $\kappa\ge 0$ is a {parameter}. 

The bilinear forms defining $\mathcal{A}((\cdot ;\cdot),(\cdot ;\cdot))$ 
here are given by 
$$
a(\bu,\bv):= (\nabla\bu,\nabla\bv),\quad b(\bv,q):= -({\rm div}\bv,q), \quad c(p,q)=(\kappa\nabla p,\nabla q), \quad \forall \bu,\bv\in \bV, \forall p,q\in Q.
$$
In this example, we set  
$Q:=H^1(\Omega) \cap L^2_0(\Omega)$, $\bV:=\bH_0^1(\Omega)$, and $\vert \cdot \vert_Q$, 
$\vert \cdot \vert_V$ to be 
\begin{equation*}
\vert q\vert_Q^2 := (q,q) \quad \forall q\in Q \quad \text{and} \quad 
\vert \bv \vert_V^2 :=(\nabla \bv,\nabla\bv) \qquad \forall \bv \in \bV.
\end{equation*}
Then the operator $B$ is defined by $B: \bV\rightarrow Q'$, $B:=-{\rm div}$ and the norm splittings~\eqref{Q_norm} and~\eqref{V_norm} are given by 
\begin{equation}\label{ex2}
\Vert q\Vert_Q^2 = \vert q\vert_Q^2 + c(q,q)=(q,q)+(\kappa \nabla q,\nabla q)
= \langle \bar{Q}q,q\rangle_{Q'\times Q}
\end{equation}
and 
\begin{equation*}
\Vert \bv\Vert_V^2 = \vert \bv\vert_V^2+\vert \bv\vert_b^2 = (\nabla \bv,\nabla\bv)
+ \langle B \bv, \bar{Q}^{-1}B\bv \rangle_{Q' \times Q}. 
\end{equation*}
Condition~\eqref{coerc_a} is automatically satisfied with $\underline{C}_a=1$. To show 
\eqref{inf_sup_b} we first note that using~\eqref{ex2} we obtain 
$$
\begin{array}{rl}
\langle B\bv,\bar{Q}^{-1}B\bv\rangle_{Q'\times Q} & = \Vert B \bv\Vert_{Q'}^2 =\left( \sup_{q\neq 0} \frac{b(\bv,q)}{\Vert q\Vert_Q}\right)^2 = 
\left( \sup_{q\neq 0} \frac{({\rm div}\bv,q)}{\Vert q\Vert_Q}\right)^2\le \left( \sup_{q\neq 0} \frac{\Vert{\rm div}\bv\Vert \Vert q\Vert}{\Vert q\Vert_Q}\right)^2 
\\
& \le 
({\rm div} \bv, {\rm div} \bv).
\end{array}
$$
%$$
%\langle \bar{Q}B\bv,B\bv\rangle_{Q'\times Q} \ge (B\bv,B\bv),
%$$
%or, equivalently, 
%$$
%\langle B\bv,\bar{Q}^{-1}B\bv\rangle_{Q\times Q'} \le (B\bv,B\bv).
%$$
Thus, 
\begin{equation}\label{v_norm_est}
\Vert \bv\Vert_V^2 = (\nabla \bv,\nabla \bv) + \langle B\bv,\bar{Q}^{-1}B\bv\rangle_{Q'\times Q} 
\le (\nabla \bv,\nabla \bv) + ({\rm div}\bv, {\rm div}\bv)\le 2(\nabla \bv,\nabla \bv) 
\le 2 \Vert \bv\Vert_1^2.
\end{equation}
Now, we choose $\bv_0$ such that $-{\rm div} \bv_0 = q$ and hereby obtain from the Stokes 
inf-sup condition~\eqref{beta_s} the estimate $\Vert \bv_0\Vert_1 \le \frac{1}{\beta_s} \Vert q\Vert$, and, finally,  
\begin{align*}
\sup_{\bv \in V} \frac{b(\bv,q)}{\Vert \bv\Vert_V} \ge 
\frac{b(\bv_0,q)}{\Vert \bv_0\Vert_V} = \frac{\Vert q\Vert^2}{\Vert \bv_0\Vert_V} 
\ge \frac{1}{\sqrt{2}}\frac{\Vert q\Vert^2}{\Vert \bv_0\Vert_1} 
\ge \frac{\beta_s}{\sqrt{2}} \frac{\Vert q\Vert^2}{ \Vert q\Vert}=: \underline{\beta}\Vert q\Vert 
=\underline{\beta} \vert q\vert_Q, \qquad \forall q\in Q.
\end{align*}

The induced norm-equivalent preconditioner in this example reads as
\begin{align*}
\mathcal{B} := \begin{bmatrix}
(-\Delta    - \nabla (I -  {\rm div}\kappa \nabla)^{-1} {\rm div} )^{-1} & \\
& (I -  {\rm div}\kappa \nabla)^{-1}
\end{bmatrix}
\eqsim 
 \begin{bmatrix}
-\Delta^{-1}     & \\
& (I -  {\rm div}\kappa \nabla)^{-1}
\end{bmatrix},
\end{align*}
where the equivalence is due to~\eqref{v_norm_est}. 
\end{example}

{
\subsection{Stokes Darcy problem}

\begin{example}\label{example_stokes_darcy}
Let  $\Omega=\Omega_S\cup  \Omega_D$ and $\Gamma=\partial \Omega_S\cap \partial\Omega_D$.  We assume that $\Gamma_{S}^{D} \cup \Gamma_{S}^{N} \cup \Gamma$ forms a disjoint decomposition of $\partial \Omega_{S}$ and, similarly, $\Gamma_{D}^{D} \cup \Gamma_{D}^{N} \cup \Gamma$ is a partition of $\partial \Omega_{D}$. Denote 
$$
H^1_{\Gamma_{i}^{D} }(\Omega_i)=\left\{w\in H^1(\Omega_i):  w|_{\Gamma_{i}^{D}}=0\right\},~~i=S~\hbox{or}~D. 
$$
The classical formulation of Stokes Darcy problem follows \cite{discacciati2002mathematical}: Find $\left(\boldsymbol{u}, p_{S}, p_{D}\right) \in \boldsymbol H^1_{\Gamma_{S}^{D} }(\Omega_S) \times L^2(\Omega_S) \times H^1_{\Gamma_{D}^{D} }(\Omega_D)$ such that
$$
\begin{aligned}
\left(2 \mu \boldsymbol{\epsilon}\left(\boldsymbol{u}\right), \boldsymbol{\epsilon}\left(\boldsymbol{v}\right)\right)_{\Omega_{S}}+\beta_{\tau}\left(\boldsymbol{\tau} \cdot \boldsymbol{u},\boldsymbol{\tau} \cdot \boldsymbol{v}\right)_{\Gamma}&&\\
-\left(p_{S}, \nabla \cdot \boldsymbol{v}\right)_{\Omega_{S}}+\left(p_{D}, \boldsymbol{n} \cdot \boldsymbol{v}\right)_{\Gamma} &=\left(\boldsymbol{f}_{S}, \boldsymbol{v}\right)_{\Omega_{S}}, & & \forall \boldsymbol{v} \in \boldsymbol H^1_{\Gamma_{S}^{D} }(\Omega_S),\\
-\left(\nabla \cdot \boldsymbol{u}, q_{S}\right)_{\Omega_{S}} &=0,& & \forall q_{S} \in L^2(\Omega_S),  & & \\
\left(\boldsymbol{n} \cdot \boldsymbol{u}, q_{D}\right)_{\Gamma}-\left(\kappa \nabla p_{D}, \nabla q_{D}\right)_{\Omega_{D}} &=\left(f_{D}, q_{D}\right)_{\Omega_{D}},& & \forall q_{D} \in H^1_{\Gamma_{D}^{D} }(\Omega_D),
\end{aligned}
$$
where $\boldsymbol{n}:=\boldsymbol{n}_{S}$ is the outer normal of the Stokes domain and $\boldsymbol{\tau}:=\boldsymbol{I}-(\boldsymbol{n} \otimes \boldsymbol{n})$ is the projection onto the tangent bundle of the interface $\Gamma$.

The bilinear forms defining $\mathcal{A}((\cdot ;\cdot),(\cdot ;\cdot))$ 
here are given by 
\begin{align*}
a(\bu,\bv)&:= \left(2 \mu \boldsymbol{\epsilon}\left(\boldsymbol{u}\right), \boldsymbol{\epsilon}\left(\boldsymbol{v}\right)\right)_{\Omega_{S}}+\beta_{\tau}\left(\boldsymbol{\tau} \cdot \boldsymbol{u},\boldsymbol{\tau} \cdot \boldsymbol{v}\right)_{\Gamma},  \quad \forall \bu,\bv\in \bV,\\
\quad b(\bv, \boldsymbol q)&:= -\left(q_{S}, \nabla \cdot \boldsymbol{v}\right)_{\Omega_{S}}+\left(q_{D}, \boldsymbol{n} \cdot \boldsymbol{v}\right)_{\Gamma}, \quad \forall \bv\in \bV, \forall \boldsymbol q\in \boldsymbol Q, \\
\quad c(\boldsymbol p,\boldsymbol q)&:=\left(\kappa \nabla p_{D}, \nabla q_{D}\right)_{\Omega_{D}} ,\quad \forall \boldsymbol p,\boldsymbol q\in \boldsymbol Q,
\end{align*}
where $\bV=\boldsymbol H^1_{\Gamma_{S}^{D} }(\Omega_S), \boldsymbol Q=L^2(\Omega_S) \times H^1_{\Gamma_{D}^{D} }(\Omega_D)$ and $\boldsymbol p=(p_S, p_D), \boldsymbol q=(q_S, q_D) $. 
We fix $\vert \cdot \vert_Q$, $\vert \cdot \vert_V$ to be 
$$
|\boldsymbol q|_Q^2:=(2\mu)^{-1} (q_S, q_S) _{\Omega_S}+(2\mu)^{-1}\Vert  q_D\Vert _{-\frac{1}{2},\Gamma}^2, \quad\forall \boldsymbol q\in \boldsymbol Q,
$$
$$
\vert  \bv\vert _V^2:=\left(2 \mu \boldsymbol{\epsilon}\left(\boldsymbol{v}\right), \boldsymbol{\epsilon}\left(\boldsymbol{v}\right)\right)_{\Omega_{S}}+\beta_{\tau}\left(\boldsymbol{\tau} \cdot \boldsymbol{v},\boldsymbol{\tau} \cdot \boldsymbol{v}\right)_{\Gamma}, \quad \forall\bu\in \bV.
$$
Noting that $a(\bv,\bv)= \vert \bv\vert^2_{\bV}$ for all $\bv\in \bV$, that is,  
~\eqref{coerc_a} is satisfied with $\underline{C}_a=1$.  In addition, we have 
\begin{align*}
\Vert \bm q\Vert_Q^2 &:=\vert \bm q\vert_Q^2+c(\bm q, \bm q)=
\langle\bar Q  \bm q, \bm q\rangle_{Q'\times Q},
~ \forall \bm q\in \bQ, \\ 
\Vert \bv \Vert_{\bV}^2 &:= \vert \bv \vert_{\bV}^2+\langle 
B \bv, \bar Q ^{-1} B\bv\rangle_{Q'\times Q}= \vert \bv \vert_{\bV}^2+\vert \bv\vert_b^2, ~ \forall \bv \in \bV.
\end{align*}
The continuity of $B$ is shown by the following calculation, utilizing the Cauchy-Schwarz inequality and a trace inequality:
\begin{equation}
\begin{aligned}
\left\langle B \boldsymbol{v},\boldsymbol q \right\rangle_{\boldsymbol Q'\times \boldsymbol Q} &=-\left(\nabla \cdot \boldsymbol{v}, p_{S}\right)_{\Omega_{S}}+\left(\boldsymbol{n} \cdot \boldsymbol{v}, q_{D}\right)_{\Gamma} \\
& \leq\Vert\nabla \cdot \boldsymbol{v}\Vert_{\Omega_{S}}\Vert q_{S}\Vert_{\Omega_{S}}+\Vert\boldsymbol{n} \cdot \boldsymbol{v}\Vert_{\frac{1}{2}, \Gamma}\Vert q_{D}\Vert_{-\frac{1}{2}, \Gamma} \\
& \lesssim\Vert\boldsymbol{\epsilon}\left(\boldsymbol{v}\right)\Vert_{\Omega_{S}}\left(\Vert q_{S}\Vert_{\Omega_{S}}+\Vert q_{D}\Vert_{-\frac{1}{2}, \Gamma}\right) \\
& \lesssim(2 \mu)^{\frac{1}{2}}\Vert\boldsymbol{\epsilon}\left(\boldsymbol{v}\right)\Vert_{\Omega_{S}}(2 \mu)^{-\frac{1}{2}}\left(\Vert q_{S}\Vert_{\Omega_{S}}^{2}+\Vert q_{D}\Vert_{-\frac{1}{2}, \Gamma}^{2}\right)^{\frac{1}{2}} \\
&\le \vert\boldsymbol{v}\vert_{\bV}\left|q\right|_{\boldsymbol Q}\le \vert\boldsymbol{v}\vert_{V}\Vert q\Vert_{\boldsymbol Q}
\end{aligned}
\end{equation}
Furthermore, by \eqref{seminorm_b}, we have 
\begin{equation}
\begin{aligned}
\vert \bv\vert_b=\Vert B\bv\Vert_{\boldsymbol Q'}=\sup_{\boldsymbol q\in \boldsymbol Q}\frac{\left\langle B \boldsymbol{v},\boldsymbol q \right\rangle_{\boldsymbol Q'\times \boldsymbol Q} }{\Vert q\Vert_{\boldsymbol Q}}\le \vert\boldsymbol{v}\vert_{\bV}, \quad \forall \bv\in \bV,
\end{aligned}
\end{equation}
and $\Vert \cdot\Vert_{\bV}$ is equivalent to $\vert \cdot\vert_{\bV}$, namely 
\begin{equation}\label{vvequi}
\Vert \bv\Vert_{\bV}\eqsim  \vert\boldsymbol{v}\vert_{\bV},\quad \forall \bv\in \bV.
\end{equation}
Now we show that~\eqref{inf_sup_b} is satisfied. By \eqref{vvequi}, it suffices to show the following inf-sup condition of~$B$:
there exists a constant $\underline{\beta}>0$ such that 
\begin{equation}\label{inf_sup_bv}
\sup_{\substack{\bv\in \bV\\ \bv\neq 0}} 
\frac{b(\bv,\boldsymbol q)}{\vert \bv\vert_{\bV}}\ge \underline{\beta}  \vert \boldsymbol q\vert_{\boldsymbol Q}, \quad \forall \boldsymbol q\in \boldsymbol Q.
\end{equation} 
For any given $\boldsymbol q=\left(q_{S}, q_{D}\right)$, let $\boldsymbol{v}^{S} \in$ $\boldsymbol{H}^{1}\left(\Omega_{S}\right)$ be constructed, using the Stokes inf-sup condition ~\eqref{beta_s}, such that
\begin{equation}
\boldsymbol{v}^{S}|_{\Gamma}=0, \quad \nabla \cdot \boldsymbol{v}^{S}=-q_{S}, \quad \Vert\boldsymbol{\epsilon}\left(\boldsymbol{v}^S\right)\Vert_{\Omega_{S}} \le \beta_s\Vert q_{S}\Vert_{\Omega_{S}}.
\end{equation}
On the other hand, let $\phi \in H^{\frac{1}{2}}(\Gamma)$ be the Riesz representative of $\left.q_{D}\right|_{\Gamma} \in H^{-\frac{1}{2}}(\Gamma)$. We then define $\boldsymbol{v}^{D} \in \boldsymbol{H}^{1}\left(\Omega_{S}\right)$ as the bounded extension that satisfies
\begin{equation}
\boldsymbol{v}^{D}|_{\Gamma}=\phi \boldsymbol n, \quad \nabla \cdot \boldsymbol{v}^{D}=0, \quad \Vert\boldsymbol{\epsilon}\left(\boldsymbol{v}^D\right)\Vert_{\Omega_{S}} \le \beta_0 \Vert \phi\Vert_{\frac12,\Gamma}= \beta_0 \Vert q_{D}\Vert_{-\frac12,\Gamma}.
\end{equation}
We now set the test function $\boldsymbol{v}_0:=(2 \mu)^{-1}\left(\boldsymbol{v}^{S}+\boldsymbol{v}^{D}\right)$. Noting that $\boldsymbol{\tau} \cdot \boldsymbol{v}_0=0$ on $\Gamma$, this function satisfies
$$
\begin{aligned}
b(\bv_0, \boldsymbol q)&=-(2 \mu)^{-1}\left(\nabla \cdot \boldsymbol{v}^{S}, q_{S}\right)_{\Omega_{D}}+(2 \mu)^{-1}\left(\boldsymbol{n} \cdot \boldsymbol{v}^{D}, q_{D}\right)_{\Gamma} \\
&=(2 \mu)^{-1}\Vert q_{S}\Vert_{\Omega_{S}}^{2}+(2 \mu)^{-1}\Vert q_{D}\Vert_{-\frac{1}{2}, \Gamma}^{2}=\vert \boldsymbol q\vert_{\boldsymbol Q}^2, \\
\vert \boldsymbol{v}_0\vert_{\bV} &=(2 \mu)^{\frac{1}{2}}\Vert \boldsymbol{\epsilon}\left((2 \mu)^{-1}\left(\boldsymbol{v}^{S}+\boldsymbol{v}^{D}\right)\right)\Vert_{\Omega_{S}} \\
& \leq(2 \mu)^{-\frac{1}{2}}\left(\Vert \boldsymbol{\epsilon}\left(\boldsymbol{v}^{S}\right)\Vert_{\Omega_{S}}+\Vert \boldsymbol{\epsilon}\left(\boldsymbol{v}^{D}\right)\Vert_{\Omega_{S}}\right) \\
& \lesssim(2 \mu)^{-\frac{1}{2}}\left(\Vert q_{S}\Vert_{\Omega_{S}}+\Vert q_{D}\Vert_{-\frac{1}{2}, \Gamma}\right)=\vert \boldsymbol q\vert_{\boldsymbol Q}.
\end{aligned}
$$
Hence, condition \eqref{inf_sup_bv} is fulfilled.
\end{example}

}
\subsection{Vector Laplace equation}
\begin{example}\label{example3}
We consider the following mixed variational formulation of the vector Laplace equation \cite{arnold2006finite,hong2021extended}: 
find $\bm p \in \bH_0({\rm curl},\Omega), \bu\in \bH_0({\rm div},\Omega)$, such that 
\begin{subequations}
\begin{align*}
(\alpha \bm p, \bm q)-(\bu, {\rm curl} \bm q)& = 0,  \qquad \forall \bm q\in \bH_0({\rm curl},\Omega), \\
-({\rm curl}\bm p, \bv)-({\rm div}\bu, {\rm div}\bv) & = (f,\bm v),\qquad \forall \bv\in \bH_0({\rm div},\Omega),
\end{align*}
\end{subequations}
where $\alpha$ is a positive scalar. 
Here, $\bH_0({\rm curl},\Omega)=\{\bm q\in \bm L^2(\Omega): {\rm curl} {\bm q} \in \bm L^2(\Omega),\, \bm q \times \bm n =0 \text{ on } \partial \Omega\}$ 
and $\bH_0({\rm div},\Omega)=\{\bm v\in \bm L^2(\Omega) : {\rm div} {\bm v} \in L^2(\Omega),\,\bm v \cdot \bm n =0 \text{ on } \partial \Omega\}$.
We rewrite the above equations as  
\begin{subequations}
\begin{align*}
({\rm div}\bu, {\rm div}\bv )+({\rm curl}\bm p, \bv)& = -(f,\bv),\qquad \forall \bv\in \bH_0({\rm div},\Omega),\\
(\bu, {\rm curl} \bm q)-(\alpha \bm p, \bm q)& = 0,  \qquad \forall \bm q\in \bH_0({\rm curl},\Omega).
\end{align*}
\end{subequations}
The bilinear forms that define $\mathcal{A}((\cdot ;\cdot),(\cdot ;\cdot))$ are 
\begin{align*}
a(\bu,\bv)&:= ({\rm div}\bu, {\rm div}\bv ),\qquad \forall \bu,\bv\in \bV,\\
b(\bv, \bm p)&:= ({\rm curl}\bm p, \bv),\qquad \forall\bv\in \bV,\forall \bm p\in \bQ, \\
c(\bm p, \bm q)&:= (\alpha \bm p, \bm q),\qquad \forall \bm p,\bm q\in \bQ,
\end{align*}
where $\bV=\bH_0({\rm div},\Omega), \bQ=\bH_0({\rm curl},\Omega)$. We fix 
$\vert \cdot \vert_Q$ and 
$\vert \cdot \vert_V$ to be 
\begin{align*}
\vert \bm q\vert_Q^2 &:=((\alpha+1) {\rm curl}  \bm q,  {\rm curl}\bm q),
\qquad \forall \bm q\in \bQ, \\ 
\vert \bv \vert_V^2 &:= ({\rm div}\bv, {\rm div}\bv ), \qquad \forall \bv \in \bV.
\end{align*}
As in the previous examples, $a(\bv,\bv)= \vert \bv\vert_V^2$ for all $\bv\in \bV$, that is,  
~\eqref{coerc_a} is satisfied with $\underline{C}_a=1$.  In addition, noting that $B:={\rm curl^*}: \bV\rightarrow \bQ'$, we have 
\begin{align*}
\Vert \bm q\Vert_Q^2 &:=\vert \bm q\vert_Q^2+c(\bm q, \bm q)=((\alpha+1){\rm curl}  \bm q,  {\rm curl}\bm q)+(\alpha \bm q, \bm q)=
\langle\bar Q  \bm q, \bm q\rangle_{Q'\times Q},
~ \forall \bm q\in \bQ, \\ 
\Vert \bv \Vert_V^2 &:= \vert \bv \vert_V^2+\langle 
B \bv, \bar Q ^{-1} B\bv\rangle_{Q'\times Q}
=({\rm div}\bv, {\rm div}\bv )+ \left((\alpha I+{\rm curl^*}(\alpha+1) {\rm curl})^{-1} {\rm curl^*} \bv, {\rm curl^*} \bv\right), ~ \forall \bv \in \bV.
\end{align*}
Now we show that~\eqref{inf_sup_b} is satisfied.  For any $\bm q\in \bQ$, choose $\bv_0={\rm curl} \bm q\in \bV$
to obtain
%, we have 
\begin{align*}
\Vert \bv_0 \Vert_V^2&=({\rm div} \, {\rm curl} \bm q, {\rm div}\, {\rm curl} \bm q )+\left((\alpha I+ {\rm curl^*}(\alpha+1){\rm curl})^{-1} {\rm curl^*}  {\rm curl} \bm q, {\rm curl^*}  {\rm curl} \bm q\right)\\
&=\left((\alpha I+ {\rm curl^*}(\alpha+1)  {\rm curl})^{-1} {\rm curl^*}   {\rm curl} \bm q, {\rm curl^*}   {\rm curl} \bm q\right)\le  (\bm q,(\alpha+1)^{-1} {\rm curl^*}   {\rm curl} \bm q)\\
&= ( (\alpha+1)^{-1} {\rm curl} \bm q,  {\rm curl} \bm q)
\end{align*} 
and 
\begin{equation}\label{ex3}
\begin{array}{rl}
\sup_{\bv \in \bV} \frac{b(\bv,\bm q)}{\Vert \bv\Vert_V} &\ge 
\frac{b(\bv_0,\bm q)}{\Vert \bv_0\Vert_V} = \frac{(  {\rm curl} \bm q,  {\rm curl} \bm q)}{\Vert \bv_0\Vert_V} 
\ge  \frac{(  {\rm curl} \bm q,  {\rm curl} \bm q)}{((\alpha+1)^{-1} {\rm curl} \bm q,  {\rm curl} \bm q)^{\frac{1}{2}}}= \vert \bm q\vert_Q.
\end{array}
\end{equation}
\begin{remark} 
Consider $\bH({\rm div}_0, \Omega)=\{\bv\in \bH({\rm div}, \Omega): {\rm div} \bv=0 \}$. 
Then for any $\alpha>0$ and $\bv\in \bH_0({\rm curl}, \Omega)\cap  \bH({\rm div}_0, \Omega)$, we have 
$$
((\alpha I+(\alpha+1){\rm curl^*curl}) \bv, \bv)\le (c_P\alpha+(\alpha+1)) ({\rm curl^*curl} \bv, \bv) \le c_1((\alpha+1)  {\rm curl^*curl} \bv, \bv), 
$$
where 
$c_P$ denotes the Poincar\'e constant for the $\rm curl$ operator and $c_1=c_P+1$. Hence, 
$$
((\alpha I+(\alpha+1){\rm curl^*curl})^{-1} \bm f, \bm f)\ge c_1^{-1}((\alpha+1)^{-1} ({\rm curl^*curl})^{-1} \bm f, \bm f), 
$$
for any $\bm f\in (\alpha I+(\alpha+1){\rm curl^*curl}) \left(\bH_0({\rm curl}, \Omega)\cap  \bH({\rm div}_0, \Omega)\right)$. 
Now, by using the Helmholtz decomposition $\bv={\rm curl} \bm w +\nabla z$ of $\bv$ and choosing $\bm f= {\rm curl^*} \bv$ we obtain 
\begin{equation}\label{Poincarecurl}
c_1^{-1}((\alpha+1)^{-1}{\rm curl} \bm w, {\rm curl} \bm w)\le  \left((\alpha I+(\alpha+1){\rm curl^*curl})^{-1} {\rm curl^*} \bv, {\rm curl^*} \bv\right).%\le  (\bv, \bv)+({\rm div}\bv, {\rm div}\bv), 
\end{equation}
%where $\bv={\rm curl} \bm w +\nabla z$,  we have 
On the other hand, the Poincare's inequality for vector Laplacian (see Theorem 2.2 in \cite{arnold2006finite}) implies 
\begin{align}\label{Poincarevec1}
(\nabla z, \nabla z) \le c_{P_v}\left( ({\rm div}(\nabla z), {\rm div}(\nabla z))+({\rm curl}(\nabla z), {\rm curl}(\nabla z))\right)=c_{P_v} ({\rm div}\bv, {\rm div}\bv),
\end{align}
where $c_{P_v}$ denotes the Poincar\'e constant for vector Laplacian.
By multiplying \eqref{Poincarevec1} with $c_1^{-1}(\alpha+1)^{-1}$ we obtain
\begin{align}\label{Poincarevec}
c_1^{-1}(\alpha+1)^{-1}(\nabla z, \nabla z) \le c_1^{-1}(\alpha+1)^{-1}c_{P_v} ({\rm div}\bv, {\rm div}\bv) \le c_1^{-1}c_{P_v} ({\rm div}\bv, {\rm div}\bv).
\end{align}
Combining \eqref{Poincarecurl} and \eqref{Poincarevec} and noting that $(\bv, \bv)=({\rm curl} \bm w, {\rm curl} \bm w)+(\nabla z, \nabla z)$, we have
\begin{align*}
%\Vert \bv \Vert_V^2 \ge C (({\rm div}\bv, {\rm div}\bv)+(\bv, \bv)).
c_1^{-1}(\alpha+1)^{-1}\Vert \bv \Vert^2 \leq c_1^{-1}c_{P_v} \Vert{\rm div}\bv\Vert^2 +\left((\alpha I+(\alpha+1){\rm curl^*curl})^{-1} {\rm curl^*} \bv, {\rm curl^*} \bv\right).
\end{align*}
Next we add $c_1^{-1}\Vert{\rm div}\bv\Vert^2 $ to both sides and get
\begin{align*}
c_1^{-1}\big((\alpha+1)^{-1}\Vert \bv \Vert^2 + \Vert{\rm div}\bv\Vert^2\big) \leq c_1^{-1}(c_{P_v}+1) \Vert{\rm div}\bv\Vert^2  
+\left((\alpha I+(\alpha+1){\rm curl^*curl})^{-1} {\rm curl^*} \bv, {\rm curl^*} \bv\right).
\end{align*}
By multiplying with $c_1$ and setting $c_2: =\max\left\lbrace  c_1,(c_{P_v}+1)\right\rbrace$ it follows that
\begin{align*}
(\alpha+1)^{-1}\Vert \bv \Vert^2 + ({\rm div}\bv, {\rm div}\bv) \leq c_2\Vert \bv \Vert_V^2.
\end{align*}
Moreover, we have
\begin{align*}
\Vert \bv \Vert_V^2 = \vert \bv \vert_V^2+\langle 
B \bv, \bar Q ^{-1} B\bv\rangle_{Q'\times Q}
&=({\rm div}\bv, {\rm div}\bv )+ \left((\alpha I+{\rm curl^*}(\alpha+1) {\rm curl})^{-1} {\rm curl^*} \bv, {\rm curl^*} \bv\right)
\\
&\le ({\rm div}\bv, {\rm div}\bv )+ \left((\alpha+1)^{-1} \bv,\bv\right).
\end{align*}
Therefore we conclude that $\Vert \bv \Vert_V^2$ and $({\rm div}\bv, {\rm div}\bv)+((\alpha+1)^{-1}\bv, \bv)$ are equivalent.

\end{remark}

The corresponding norm-equivalent preconditioner  is then given by
\begin{align*}
\mathcal{B} :=
\begin{bmatrix}
((\alpha+1)^{-1}I - \nabla{ \rm div})^{-1} & \\
& (\alpha I + (\alpha+1){\rm curl}^* {\rm curl})^{-1}
\end{bmatrix}.
\end{align*}
%To solve the $\bH(\rm div)$ subproblem, one can use the preconditioners  \cite{Hiptmair.R;Xu.J2007a,kraus2016preconditioning} 
%and multigrid method \cite{vassilevski1996preconditioning,arnold1997preconditioning}. And 
To solve the $\bH(\rm curl)$ subproblem, 
one can use the preconditioner proposed in~\cite{Hiptmair.R;Xu.J2007a}, 
the multigrid method in~\cite{arnold1997preconditioning}, or domain decomposition methods~\cite{Toselli2005domain}. 

\end{example}

\subsection{Poromechanics}

\begin{example}\label{example4}
The two-field formulation of the quasi-static Biot's consolidation model  
after semidiscretization in time by the implicit Euler method as 
studied in~\cite{lee2017parameter,Adler2018robust}, 
reads: find $(\bu,p_F)\in \bH_0^1(\Omega)\times H_0^1(\Omega)$ such that
\begin{subequations}
\begin{align*}
(\varepsilon( \bu), \varepsilon(\bv))+\lambda({\rm div} \bu, {\rm div}\bv)-\alpha
(p_F,{\rm div}\bv)& = (\bbf,\bv),  \qquad \forall \bv\in \bH_0^1(\Omega), \\
-\alpha({\rm div} \bu,q_F)-{c_0}(p_F,q_F)
-  (\kappa \nabla p_F,\nabla q_F) & = (g,q_F),\qquad \forall q\in H_0^1(\Omega),
\end{align*}
\end{subequations}
where $\lambda \ge 0$ is a scaled Lam\'e coefficient, 
$c_0\ge 0$ is the storage coefficient, $\kappa$ is the (scaled) hydraulic conductivity,
and $\alpha$ is the (scaled) Biot-Willis coefficient.

The bilinear forms defining $\mathcal{A}((\cdot ;\cdot),(\cdot ;\cdot))$ are given by
\begin{align*}
a(\bu,\bv)&:= (\varepsilon( \bu), \varepsilon(\bv))+\lambda({\rm div} \bu, {\rm div}\bv),\qquad \forall \bu,\bv\in \bV,\\
b(\bv,q_F)&:= -\alpha({\rm div}\bv,q_F),\qquad \forall\bv\in \bV,\forall q_F\in Q, \\
c(p_F,q_F)&:= { c_0}(p_F,q_F)
+ (\kappa \nabla p_F,\nabla q_F),\qquad \forall p_F,q_F\in Q,
\end{align*}
where
$Q:=H_0^1(\Omega)$, $\bV:=\bH_0^1(\Omega)$. We define 
$\vert \cdot \vert_Q$, 
$\vert \cdot \vert_V$ to be 
\begin{align*}
\vert q_F\vert_Q^2 &:= 
%\frac{\alpha^2}{\lambda}
\eta(q_F,q_F),
%+ (\kappa \nabla q_F,\nabla q_F), 
\qquad \forall q_F\in Q, \\ 
\vert \bv \vert_V^2 &:=(\varepsilon( \bv), \varepsilon(\bv))+\lambda({\rm div} \bv, {\rm div}\bv), \qquad \forall \bv \in \bV,
\end{align*}
where the parameter $\eta> 0$ is to be determined later.
As before, $a(\bv,\bv)\ge \vert \bv\vert_V^2$ for all $\bv\in \bV$, that is,  
~\eqref{coerc_a} is satisfied with $\underline{C}_a=1$. 
It remains to show~\eqref{inf_sup_b}. 
%As in the previous example, 
%From the definition of $\Vert \cdot\Vert_Q$ 
As in Example~\ref{example2}, it is easy to see that 
\begin{equation*}
\langle B\bv,\bar{Q}^{-1}B\bv\rangle_{Q'\times Q} \le \frac{\alpha^2}{\eta} ({\rm div}\bv, {\rm div}\bv),
\end{equation*}
where $B: \bV\rightarrow Q'$, $B:=-\alpha {\rm div}$. Therefore, we obtain
%\begin{align*}
\begin{equation}\label{bound_v_norm_ex4}
\begin{array}{rl}
\Vert \bv\Vert_V^2 & = (\varepsilon( \bv), \varepsilon(\bv))+\lambda({\rm div} \bv, {\rm div}\bv) + \langle B\bv,\bar{Q}^{-1}B\bv\rangle_{Q'\times Q} \\
&\le  (\varepsilon( \bv), \varepsilon(\bv)) + \left(\lambda+\frac{\alpha^2}{\eta}\right)({\rm div}\bv, {\rm div}\bv)
% \le \left(1+\lambda+\frac{\alpha^2}{\eta}\right) (\nabla \bv,\nabla \bv) 
\le \left(1+\lambda+\frac{\alpha^2}{\eta}\right) \Vert \bv\Vert_1^2.
\end{array}
\end{equation}
%\end{align*}
We choose $\bv_0$ such that $-{\rm div} \bv_0 = \frac{1}{\sqrt{1+\lambda}}q_F$ 
and use~\eqref{beta_s} to obtain $\Vert \bv_0\Vert_1 \le \frac{1}{\beta_s}\frac{1}{\sqrt{1+\lambda}} \Vert q_F\Vert$, and finally  

\begin{equation}\label{ex3}
\begin{array}{rl}
\sup_{\bv \in \bV} \frac{b(\bv,q_F)}{\Vert \bv\Vert_V} &\ge 
\frac{b(\bv_0,q_F)}{\Vert \bv_0\Vert_V} = \frac{\frac{\alpha}{\sqrt{1+\lambda}}\Vert q_F\Vert^2}{\Vert \bv_0\Vert_V} 
\ge \frac{\frac{\alpha}{\sqrt{1+\lambda}}}{\sqrt{\left(1+\lambda+\frac{\alpha^2}{\eta}\right)}}\frac{\Vert q_F\Vert^2}{\Vert \bv_0\Vert_1}
\\ 
&\ge \frac{\beta_s\alpha}{\sqrt{\left(1+\lambda+\frac{\alpha^2}{\eta}\right)}} \frac{\Vert q_F\Vert^2}{ \Vert q_F\Vert}
= \frac{\beta_s\alpha}{\sqrt{\left(1+\lambda+\frac{\alpha^2}{\eta}\right)}} \frac{1}{\sqrt{\eta}} \vert q_F\vert_Q.
%=: \underline{\beta}\Vert q\Vert 
%=\underline{\beta} \vert q\vert_Q. 
\end{array}
\end{equation}
For $\eta:= \frac{\alpha^2}{(1+\lambda)}>0$ the right-hand side of~\eqref{ex3} is bounded from below by  
$\frac{\beta_s}{\sqrt{2}}\vert q_F\vert_Q $ which shows~\eqref{inf_sup_b} with $\underline{\beta}=\frac{1}{\sqrt{2}}\beta_s$. 
Note that there are also other possible choices for $\eta$.

We conclude that 
\begin{align*}
\mathcal{B}& :=
\begin{bmatrix}
(-{\rm div} \varepsilon-(1+\lambda) \nabla {\rm div})^{-1} & \\ 
& \left(\left(c_0+{\alpha^2}/{(1+\lambda)}\right)I -  {\rm div}\kappa \nabla\right)^{-1} 
\end{bmatrix} 
\end{align*}
%\begin{align*}
%\mathcal{B}& :=
%\begin{bmatrix}
% \left(-{\rm div} \varepsilon - \nabla \left(\left(c_0+{\alpha^2}/{(1+\lambda)}\right)I -  {\rm div}\kappa \nabla\right)^{-1}  {\rm div}\right)^{-1} & \\ 
%& \left(\left(c_0+{\alpha^2}/{(1+\lambda)}\right)I - {\rm div}\kappa \nabla\right)^{-1} 
%\end{bmatrix} \\
%& \eqsim 
%\begin{bmatrix}
%(-{\rm div} \varepsilon-\lambda \nabla {\rm div})^{-1} & \\ 
%& \left(\left(c_0+{\alpha^2}/{(1+\lambda)}\right)I -  {\rm div}\kappa \nabla\right)^{-1} 
%\end{bmatrix}
%\end{align*}
provides a norm-equivalent preconditioner for the combined norm, where 
we have used~\eqref{bound_v_norm_ex4}. 
%the equivalence relation follows from.
To solve the elasticity subproblem, one can use the multigrid method 
proposed in \cite{hong2016robust,hong2016uniformly}.
\end{example}

\begin{example}\label{example5}
By introducing $p_S=-\lambda{\rm div} \bu$ and  
substituting $\alpha p_F\rightarrow p_F, c_0\alpha^{-2}\rightarrow c_0, \kappa \alpha^{-2} \rightarrow \kappa, \alpha^{-1}g\rightarrow g$
in {\it Example} \ref{example4} we obtain the following three-field variational formulation 
of Biot's model, see~\cite{lee2017parameter},
\begin{subequations}
\begin{align*}
(\varepsilon( \bu), \varepsilon(\bv))-(p_S+p_F, {\rm div}\bv)& = (\bbf,\bv),  \qquad \forall \bv\in \bH_0^1(\Omega), \\
-({\rm div} \bu,q_S)-\lambda^{-1}(p_S,q_S) &= 0, \qquad \forall q_S\in L_0^2(\Omega),\\
-({\rm div}\bu,q_F)-{ c_0}(p_F,q_F)
-  (\kappa \nabla p_F,\nabla q_F) & = (g,q_F),\qquad \forall q_F\in H_0^1(\Omega).
\end{align*}
\end{subequations}
The bilinear forms that determine $\mathcal{A}((\cdot ;\cdot),(\cdot ;\cdot))$ are 
\begin{align*}
a(\bu,\bv)&:= (\varepsilon( \bu), \varepsilon(\bv)),\qquad \forall \bu,\bv\in \bV,\\
b(\bv,\bq)&:= -({\rm div}\bv,q_S)-({\rm div}\bv,q_F),\qquad \forall\bv\in \bV,\forall \bq\in \bQ, \\
c(\bp,\bq)&:= \lambda^{-1}(p_S,q_S)
+ { c_0}(p_F,q_F)+ (\kappa \nabla p_F,\nabla q_F),\qquad \forall \bp,\bq \in \bQ,
\end{align*}
where $\bV=\bH_0^1(\Omega)$, $\bQ=L_0^2(\Omega)\times H_0^1(\Omega)$ and 
$\bp = (p_S;p_F)$, $\bq=(q_S;q_F)$.
Then the operator $B$ is given by 
$$
B := \begin{pmatrix} -{\rm div} \\ -{\rm div}  \end{pmatrix}.
$$
Moreover, we define  
$\vert \cdot \vert_Q$, 
$\vert \cdot \vert_V$ to be 
\begin{align*}
\vert\bq\vert_Q^2 &:= 
\left(
\begin{pmatrix}
I & I \\
I & I
\end{pmatrix}
\begin{pmatrix} q_S\\ { q_{F,0}}  \end{pmatrix},
\begin{pmatrix} q_S\\ { q_{F,0}} \end{pmatrix}
\right)= \Vert q_S+{ q_{F,0}} \Vert^2,
\qquad \forall \bq\in \bQ, \\ 
\vert \bv \vert_V^2 &:=(\varepsilon( \bv), \varepsilon(\bv)), \qquad \forall \bv \in \bV,
\end{align*}
where $q_{F,0}:=P_0q_F$ and $P_0$ 
is the $L^2$ projection from $L^2(\Omega)$ to $L^2_0(\Omega)$. 
Then 
\begin{align*}
\Vert \bq\Vert_Q^2 & = 
\left(
\begin{pmatrix}
I & I \\
I & I
\end{pmatrix}
\begin{pmatrix} q_S\\ { q_{F,0}} \end{pmatrix},
\begin{pmatrix} q_S\\ { q_{F,0}} \end{pmatrix}
\right)
+ \left(\begin{pmatrix}
\lambda^{-1} I &0 \\ 0& { c_0}I - {\rm div} \kappa \nabla
 \end{pmatrix} 
\begin{pmatrix} q_S\\ q_F \end{pmatrix},
\begin{pmatrix} q_S\\ q_F \end{pmatrix} 
 \right)\\
 & 
% =  \left(\begin{pmatrix}
%\left(1+\lambda^{-1}\right) I & I \\ I &\left(1+ { c_0}\right)I - {\rm div} \kappa \nabla
% \end{pmatrix} 
%\begin{pmatrix} q_S\\ q_F \end{pmatrix},
%\begin{pmatrix} q_S\\ q_F \end{pmatrix} 
% \right)
=  \left(\begin{pmatrix}
(1+\lambda^{-1}) I &P_0 \\ P_0& P_0+{c_0}I - {\rm div} \kappa \nabla
 \end{pmatrix} 
\begin{pmatrix} q_S\\ q_F \end{pmatrix},
\begin{pmatrix} q_S\\ q_F \end{pmatrix} 
 \right)=(\bar{Q}\bq,\bq)
 =\langle \bar{Q}\bq,\bq\rangle_{\bQ'\times\bQ}.
\end{align*}
As in the previous examples, \eqref{coerc_a} is satisfied with $\underline{C}_a=1$.
Next, we choose $\bv_0$ such that $-{\rm div}\bv_0=q_S+{ q_{F,0}} $ for which we have $\Vert \bv_0\Vert_1 \le \beta_s^{-1}
\Vert q_S+{ q_{F,0}} \Vert$, see~\eqref{beta_s}. Then 
\begin{equation*}
b(\bv_0,\bq)= \Vert q_S+{ q_{F,0}} \Vert^2=\vert \bq\vert_Q^2.
\end{equation*}
Moreover, 
\begin{align*}
\Vert \bv_0\Vert_V^2 & =  (\varepsilon( \bv_0), \varepsilon(\bv_0))
+ (\bar{Q}^{-1}B\bv_0,B\bv_0) = (\varepsilon( \bv_0), \varepsilon(\bv_0)) 
+ \left(\bar{Q}^{-1} \begin{pmatrix}-{\rm div}\bv_0 \\-{\rm div}\bv_0  \end{pmatrix}, 
\begin{pmatrix} -{\rm div}\bv_0\\-{\rm div}\bv_0 \end{pmatrix}
 \right)\\ 
 & \le \Vert \bv_0 \Vert_1^2 
+ \frac14 \left(\bar{Q}^{-1} \begin{pmatrix} I & P_0 \\ P_0 & P_0 \end{pmatrix} \begin{pmatrix} {\rm div}\bv_0\\ {\rm div}\bv_0  \end{pmatrix}, 
 \begin{pmatrix} I & P_0 \\ P_0 & P_0 \end{pmatrix} \begin{pmatrix}  {\rm div}\bv_0\\ {\rm div}\bv_0  \end{pmatrix}
 \right) \\
 & 
 \le \Vert \bv_0 \Vert_1^2 
+ \frac14\left(\begin{pmatrix} I & P_0 \\ P_0& P_0 \end{pmatrix} \begin{pmatrix}  {\rm div}\bv_0\\ {\rm div}\bv_0 \end{pmatrix}, 
 \begin{pmatrix}  {\rm div}\bv_0\\ {\rm div}\bv_0  \end{pmatrix}
 \right) = \Vert \bv_0 \Vert_1^2 
+ ({\rm div}\bv_0,{\rm div}\bv_0)\\
 &\le \beta_s^{-2}\Vert q_S+{ q_{F,0}} \Vert^2 + \Vert q_S+{ q_{F,0}} \Vert^2 
 = (\beta_s^{-2}+1) \vert \bq \vert_Q^2.
\end{align*}
Now~\eqref{inf_sup_b} follows directly from    
\begin{align*}
\sup_{\bv \in V} \frac{b(\bv,\bq)}{\Vert \bv\Vert_V} &\ge 
\frac{b(\bv_0,\bq)}{\Vert \bv_0\Vert_V} \ge \frac{\vert \bq\vert_Q^2}{\sqrt{(\beta_s^{-2}+1)}
\vert \bq\vert_Q}=:\underline{\beta}\vert \bq\vert_Q, \qquad \forall \bq \in \bQ.
\end{align*}
Using the fitted norms for the constructions of a norm-equivalent preconditioner, cf.~\cite{mardal2011preconditioning}, results in
%We conclude that 
\begin{align*}
\mathcal{B}& :=
\begin{bmatrix}
\left(-{\rm div} \varepsilon\right)^{-1} & \\ 
& \begin{pmatrix}
(1+\lambda^{-1}) I &P_0 \\ P_0& P_0+{ c_0}I - {\rm div} \kappa \nabla
 \end{pmatrix}^{-1} 
\end{bmatrix}.
\end{align*}
\end{example}

\begin{remark}
In~\cite{lee2017parameter}, the authors showed that the three-field formulation for Biot's model  in {\it Example}~\ref{example5} 
is not stable under the $Q$-seminorm defined by $\vert \bq\vert_Q^2=\Vert p_S\Vert^2+\Vert p_F\Vert^2$.
\end{remark}

\begin{example}\label{example6}
By introducing the total pressure $p_T=p_S+p_F$ in {\it Example} \ref{example5}, another discrete in time
three-field formulation of the quasi-static Biot's consolidation model, see~\cite{lee2017parameter}, is obtained
and has the form
%after semidiscretization in time with the implicit Euler method as presented in~\cite{lee2017parameter}
%is considered as follows
\begin{subequations} 
\begin{align*}
(\varepsilon( \bu), \varepsilon(\bv))-(p_T, {\rm div}\bv)& = (\bbf,\bv),  \qquad \forall \bv\in \bH_0^1(\Omega), \\
-({\rm div} \bu,q_T)-(\lambda^{-1}p_T,q_T)
+(\alpha \lambda^{-1} p_F,q_T) &= 0, \qquad \forall q_T\in L^2(\Omega),\\
(\alpha \lambda^{-1} p_T,q_F) - ({ (\alpha^2 \lambda^{-1}+c_0)}p_F,q_F)
-  (\kappa \nabla p_F,\nabla q_F) & = (g,q_F),\qquad \forall q_F\in H_0^1(\Omega). 
\end{align*} 
\end{subequations}
Here, $\mathcal{A}((\cdot ;\cdot),(\cdot ;\cdot))$ is constructed from  
\begin{align*}
a(\bu,\bv):=& (\varepsilon( \bu), \varepsilon(\bv)),\qquad \forall \bu,\bv\in \bV,\\
b(\bv,\bq):=& -({\rm div}\bv,q_T),\qquad \forall\bv\in \bV,\forall \bq\in \bQ, \\
c(\bp,\bq):=&(\lambda^{-1}p_T,q_T)
-(\alpha \lambda^{-1} p_F,q_T) - (\alpha \lambda^{-1} p_T,q_F) + ({ (\alpha^2 \lambda^{-1}+c_0)}p_F,q_F) 
\\ &
+  (\kappa \nabla p_F,\nabla q_F),\qquad \forall \bp,\bq \in \bQ,
\end{align*}
where $\bV=\bH_0^1(\Omega)$, $\bQ=L^2(\Omega)\times H_0^1(\Omega)$ and 
$\bp = (p_T;p_F)$, $\bq=(q_T;q_F)$. Obviously, the operator $B: \bV \rightarrow \bQ'$ is defined by
\begin{align*}
B:=\begin{pmatrix}-{\rm div}\\0 \end{pmatrix}.
\end{align*}
We next set  
\begin{align*}
\vert\bq\vert_Q^2 &:= (q_{T,0},q_{T,0}), \qquad \forall \bq\in \bQ, \\ 
\vert \bv \vert_V^2 &:=(\varepsilon( \bv), \varepsilon(\bv)), \qquad \forall \bv \in \bV,
\end{align*}
where $q_{T,0}:=P_0 q_T$ is the $L^2$ projection of $q_T\in L^2(\Omega)$ to $L_0^2(\Omega)$.
Now we choose $\bv_0\in \bV= \bH_0^1(\Omega)$ such that 
$-{\rm div}\bv_0= q_{T,0}$ for which it holds 
$\Vert \bv_0\Vert_1 \le \beta_s^{-1}\Vert q_{T,0}\Vert=\beta_s^{-1} \vert \bq\vert_Q$, 
see~\eqref{beta_s}, and also 
\begin{align*}
b(\bv_0,\bq)=(q_{T,0},q_{T,0})=\vert \bq\vert_Q^2.
\end{align*}
From the definition of $\Vert \cdot\Vert_Q$, and using similar arguments as in the previous examples, we obtain  
\begin{align*}
\Vert \bv_0\Vert_V^2 &= (\varepsilon( \bv_0), \varepsilon(\bv_0)) + 
\langle B \bv_0, \bar{Q}^{-1} B\bv_0\rangle \le  (\varepsilon( \bv_0), \varepsilon(\bv_0))  
+ ({\rm \div} \bv_0, {\rm div}\bv_0) \le   2\Vert \bv_0\Vert_1^2 \le 2\beta_s^{-2}\vert \bq\vert_Q^2.
\end{align*}
Again, \eqref{coerc_a} is satisfied with $\underline{C}_a=1$ while~\eqref{inf_sup_b} follows from 
\begin{align*}
\sup_{\bv \in V} \frac{b(\bv,\bq)}{\Vert \bv\Vert_V} &\ge 
\frac{b(\bv_0,\bq)}{\Vert \bv_0\Vert_V} \ge \beta_s 
\frac{\vert \bq\vert_Q^2}{
\vert \bq\vert_Q}=:\underline{\beta}\vert \bq\vert_Q, \qquad \forall \bq \in \bQ.
\end{align*}

Thus, the fitted norms generate the norm-equivalent preconditioner 
\begin{align*}
\mathcal{B}& :=
\begin{bmatrix}
\left(-{\rm div} \varepsilon\right)^{-1} & \\ 
& \begin{pmatrix}
 \lambda^{-1} I +P_0  &-\alpha \lambda^{-1} I  \\ -\alpha \lambda^{-1} I & \alpha^2 \lambda^{-1} I +{ c_0}I - {\rm div} \kappa \nabla
 \end{pmatrix}^{-1} 
\end{bmatrix}.
\end{align*}
\begin{remark}
The arguments presented above are valid for a vanishing storage coefficient, i.e., $c_0=0$.
%and illustrate well how {\it Example} \ref{example4} and {\it Example} \ref{example5} are related to each other.
Moreover, our analysis shows how {\it Example}~\ref{example5} and {\it Example}~\ref{example6} are related to each other. 
In fact, by the transformation $\begin{pmatrix} p_T \\ p_F \end{pmatrix}= \begin{pmatrix} I & I \\ 0 & I \end{pmatrix} \begin{pmatrix} p_S \\ p_F \end{pmatrix}$ or,  equivalently, 
$\begin{pmatrix} p_S \\ p_F \end{pmatrix}= \begin{pmatrix} I & -I \\ 0 & I \end{pmatrix} \begin{pmatrix} p_T \\ p_F \end{pmatrix}$, we can derive the stability and preconditioners of {\it Example}~\ref{example5} and {\it Example}~\ref{example6}  from each other. 
\end{remark}

\begin{remark}
Note that for the case $c_0=\alpha^2 \lambda^{-1}$, as considered in \cite{lee2017parameter}, we can further estimate
\begin{align*}
c(\bq,\bq):=&(\lambda^{-1}q_T,q_T)
-2(\alpha \lambda^{-1} q_F,q_T) + 2(\alpha^2 \lambda^{-1}q_F,q_F) 
+  (\kappa \nabla q_F,\nabla q_F)\\
\ge& \frac{1}{4}(\lambda^{-1}q_T,q_T)+ \frac23(\alpha^2 \lambda^{-1}q_F,q_F) 
+  (\kappa \nabla q_F,\nabla q_F)\\
\ge& \frac{1}{4}\left((\lambda^{-1}q_T,q_T)+(\alpha^2 \lambda^{-1}q_F,q_F) 
+  (\kappa \nabla q_F,\nabla q_F)\right).
\end{align*}
Hence, we obtain
\begin{align*}
\Vert \bq\Vert^2_Q=\vert \bq\vert^2_Q+c(\bq,\bq)\ge \frac{1}{4}\left((q_{T,0},q_{T,0})+ (\lambda^{-1}q_T,q_T)+(\alpha^2 \lambda^{-1}q_F,q_F) 
+  (\kappa \nabla q_F,\nabla q_F)\right), 
\end{align*}
from which we conclude the stability result and preconditioner shown in~\cite{lee2017parameter}: 
\begin{align*}
\mathcal{B}_0& :=
\begin{bmatrix}
\left(-{\rm div} \varepsilon\right)^{-1} & \\ 
& \begin{pmatrix}
 \lambda^{-1} I +P_0  &  \\ & \alpha^2 \lambda^{-1} I - {\rm div} \kappa \nabla
 \end{pmatrix}^{-1} 
\end{bmatrix}.
\end{align*}
\end{remark}
\end{example}

\begin{example}\label{example7}
Next we consider a four-field formulation of Biot's model in which a total pressure has been introduced~\cite{Kumar2020conservative}.
The variational problem reads as 
\begin{subequations}
\begin{align*}
2\mu(\varepsilon( \bu), \varepsilon(\bv))+(p_T,{\rm div}\bv)& = (\bbf,\bv),  \qquad \forall \bv\in \bH_0^1(\Omega), \\
\frac{1}{\tau \kappa}(\bw,\bz) -(p,{\rm div}\bz) &= 0,\qquad\forall \bz \in \bH_0({\rm div},\Omega), \\
({\rm div}\bu,q_T)-\lambda^{-1}(p_T,q_T)- \frac{\alpha}{\lambda}(p,q_T) 
 &= 0, \qquad \forall q_T \in L^2_0(\Omega),
\\
-({\rm div}\bw,q)-\frac{\alpha}{\lambda}(p_T,q)-  \left({ c_0}+\frac{\alpha^2}{\lambda}\right)(p,q)
 & = (f,q), \qquad \forall q\in L^2_0(\Omega),
\end{align*} 
\end{subequations}
with $\mu$ and $\lambda$ the Lam\'e coefficients, 
$\tau$ the time step size, $\kappa$ the hydraulic conductivity, $\alpha$ the 
Biot-Willis coefficient, and $c_0\ge 0$ the constrained specific storage coefficient.

The bilinear forms defining $\mathcal{A}((\cdot ;\cdot),(\cdot ;\cdot))$ in this example are given by 
\begin{align*}
a(\bbu,\bbv):=& 2\mu (\varepsilon( \bu), \varepsilon(\bv))
+\frac{1}{\tau \kappa}(\bw,\bz)
,\qquad \forall \bbu,\bbv\in \bV,\\
b(\bbv,\bq):=& ({\rm div}\bv,q_T)-({\rm div} \bz,q),\qquad \forall\bbv\in \bV,\forall \bq\in \bQ, \\
c(\bp,\bq):=&\lambda^{-1}(p_T,q_T)
+ \frac{\alpha}{\lambda}(p,q_T) +\frac{\alpha}{\lambda}(p_T,q)
%\\ &
+   \left({ c_0}+\frac{\alpha^2}{\lambda}\right)(p,q),\qquad \forall \bp,\bq \in \bQ,
\end{align*}
where 
$\bbu:=(\bu;\bw), \bbv:=(\bv;\bz) \in \bV =\bH_0^1(\Omega)\times \bH_0({\rm div},\Omega),$
$\bp:=(p_T;p), \bq:=(q_T;q) \in \bQ=L^2_0(\Omega)\times L^2_0(\Omega).$
Hence, $B$ has the form
\begin{align*}
B=\begin{pmatrix} 
{\rm div} &0 \\ 0& -{\rm div}
\end{pmatrix}.
\end{align*}
Using {the norm fitting technique, we define} 
\begin{align*}
\vert\bq\vert_Q^2 &:= \frac{1}{2\mu}
(q_{T},q_{T})+\tau \kappa (q,q), \qquad \forall \bq\in \bQ, \\ 
\vert \bbv \vert_V^2 &:=2\mu(\varepsilon( \bv), \varepsilon(\bv))+
\frac{1}{\tau \kappa}(\bz,\bz), \qquad \forall \bbv \in \bV.
\end{align*} 
For this choice of $\vert \cdot\vert_V$ the coercivity estimate~\eqref{coerc_a} is 
again fulfilled with $\underline{C}_a=1$.

Now, in view of the Stokes and Darcy inf-sup 
conditions~\eqref{beta_s} and \eqref{beta_d}, for any $\bq=(q_T; q)$ 
we can choose $\bbv_0=(\bv_0;\bz_0)$ such that ${\rm div}\bv_0=\frac{1}{2\mu} q_T$ and 
$-{\rm div}\bz_0=\tau \kappa\,q$ and it holds that

\begin{align}\label{divStokes}
\Vert \bv_0\Vert_1 \le\frac{\beta_s^{-1}}{2\mu} \Vert q_T\Vert, \qquad 
\Vert \bz_0\Vert_{\rm div} \le \beta_d^{-1}  \tau \kappa \Vert q\Vert.
\end{align}
Then we have 
\begin{align}\label{bv0q}
b(\bbv_0, \bq)=& ({\rm div}\bv_0,q_T)-({\rm div} \bz_0,q)= \frac{1}{2\mu}(q_T,q_T)+\tau\kappa (q,q)=\vert\bq\vert_Q^2  
\end{align}
and 
\begin{align*}
\Vert \bbv_0\Vert_V^2&=2\mu(\varepsilon( \bv_0), \varepsilon(\bv_0))+\frac{1}{\tau \kappa}(\bz_0,\bz_0)+\langle B \bbv_0, \bar{Q}^{-1} B\bbv_0\rangle\\
&= 2\mu(\varepsilon( \bv_0), \varepsilon(\bv_0))+\frac{1}{\tau \kappa}(\bz_0,\bz_0)+ \left(\bar{Q}^{-1}  \begin{pmatrix} {\rm div}\bv_0 \\ -{\rm div}\bz_0\end{pmatrix}, 
 \begin{pmatrix} {\rm div}\bv_0\\ -{\rm div}\bz_0 \end{pmatrix}  \right) \\
 &\le 2\mu \Vert\bv_0\Vert_1^2+\frac{1}{\tau \kappa}\Vert\bz_0\Vert^2+ \left( \begin{pmatrix} 2\mu I & 0 \\ 0 & \frac{1}{\tau\kappa}I\end{pmatrix}\begin{pmatrix} {\rm div}\bv_0 \\ -{\rm div}\bz_0\end{pmatrix}, 
 \begin{pmatrix} {\rm div}\bv_0\\ -{\rm div}\bz_0 \end{pmatrix} \right)\\
 &= 2\mu \Vert\bv_0\Vert_1^2+\frac{1}{\tau \kappa}\Vert\bz_0\Vert^2+2\mu \Vert {\rm div}\bv_0\Vert^2+\frac{1}{\tau\kappa}\Vert {\rm div} \bz_0\Vert^2\\
 &\le 4\mu \Vert\bv_0\Vert_1^2+\frac{1}{\tau \kappa}\Vert\bz_0\Vert_{{\rm div}}^2.
  \end{align*}
 Now by \eqref{divStokes} and the definition of $\vert \cdot\vert_Q$, we obtain  
\begin{equation}
\label{estv0}
\begin{array}{rl}
\Vert \bbv_0\Vert_V^2&\le 4\mu \Vert\bv_0\Vert_1^2+\frac{1}{\tau \kappa}\Vert\bz_0\Vert_{{\rm div}}^2\le 4\mu \frac{\beta_s^{-2}}{4\mu^2} \Vert q_T\Vert^2+\frac{1}{\tau \kappa} \beta_d^{-2}  \tau^2 \kappa^2 \Vert q\Vert^2\\
&\le 2\max\{\beta_s^{-2}, \beta_d^{-2}\}\left( \frac{1}{2\mu} \Vert q_T\Vert^2+\tau \kappa \Vert q\Vert^2 \right)\le 2\max\{\beta_s^{-2}, \beta_d^{-2}\} \vert \bq\vert^2_Q.
 \end{array}
 \end{equation}
Hence, in this example \eqref{inf_sup_b} follows from 
\begin{align*}
\sup_{\bbv \in \bV} \frac{b(\bbv,\bq)}{\Vert \bbv\Vert_V} &\ge 
\frac{b(\bbv_0,\bq)}{\Vert \bbv_0\Vert_V}\ge \frac{\vert \bq\vert_Q^2}{\sqrt{2\max\{\beta_s^{-2}, \beta_d^{-2}\} }
\vert \bq\vert_Q}=:\underline{\beta}\vert \bq\vert_Q, \qquad \forall \bq\in \bQ,
\end{align*}
where we have used \eqref{bv0q} and \eqref{estv0}. 

From our findings we conclude that 
\begin{align*}
\mathcal{B} & : = 
\begin{bmatrix}
\left(
\begin{bmatrix}
-2\mu {\rm div}\varepsilon & \\
&(\tau \kappa)^{-1}I
\end{bmatrix}
 + 
 \begin{bmatrix}
-\nabla & \\
& \nabla
\end{bmatrix} 
\mathcal{C}^{-1}
\begin{bmatrix}
\rm{div} & \\ 
& -\rm{div}
\end{bmatrix}\right)^{-1}
\\
& \mathcal{C}^{-1}
\end{bmatrix}\\
& \eqsim 
\begin{bmatrix}
(-2\mu {\rm div}\varepsilon)^{-1} & & \\
&\tau\kappa (I -\nabla {\rm div})^{-1} &
\\
& & \mathcal{C}^{-1}
\end{bmatrix}
,
\end{align*}
provides a norm-equivalent preconditioner, 
where 
\begin{align*}
\mathcal{C}:=
\begin{bmatrix}
(\lambda^{-1} +(2\mu)^{-1})I & \alpha \lambda^{-1}I \\
\alpha \lambda^{-1}I & (c_0+\alpha^2\lambda^{-1}+\tau \kappa)I
\end{bmatrix} \quad
\text{and} \quad
\mathcal{C}^{-1}=
\eta\begin{bmatrix}
2 \mu (\alpha^2 + \lambda(c_0+\tau \kappa)) I & -2\mu \alpha I \\
-2 \mu \alpha I & (\lambda + 2\mu) I
\end{bmatrix}
,
\end{align*}
with $\eta=1/(\alpha^2 +(\lambda+2\mu)(c_0+\tau\kappa))$.
%To solve the $H(\rm div)$ subproblem, one can use the preconditioners  \cite{Hiptmair.R;Xu.J2007a,kraus2016preconditioning} 
%and multigrid method \cite{vassilevski1996preconditioning,arnold1997preconditioning}. 
\end{example}

\begin{example}\label{example8}
%\label{ex:3-field_classical}
Finally, let us consider the classical three-field formulation of Biot's consolidation model as analyzed in~\cite{HongKraus2018}.
After some rescaling of parameters and semi-discretization in time by the implicit Euler method the static variational problem
%(see equation (2.4) in~\cite{HongKraus2018}) 
to be
solved in each time step is given by
\begin{subequations}
\begin{align*}
(\varepsilon( \bu), \varepsilon(\bv)) + \lambda_\mu ({\rm div}\bu,{\rm div}\bv)-(p, {\rm div}\bv)& = (\bbf,\bv),  \qquad \forall \bv\in \bH_0^1(\Omega), \\
R_p^{-1} (\bw,\bz) -(p,{\rm div} \bz) &= 0, \qquad \forall \bz \in \bH_0({\rm div},\Omega) \\
-({\rm div} \bu,q) -({\rm div} \bw,q) -\alpha_p (p,q) &= (g,q), \qquad \forall q \in L^2_0(\Omega),
\end{align*} 
\end{subequations}
with parameters $\lambda_\mu=\lambda/(2 \mu)$, $R_p^{-1}=\alpha^2 \tau^{-1} \kappa^{-1}$, and $\alpha_p=c_0 \alpha^{-2}$.
In this example, we set $\bV:=\bH_0^1(\Omega) \times \bH_0({\rm div},\Omega)$, $Q:=L^2_0(\Omega)$. 
%Obviously, 
The individual
bilinear forms 
%that generate $\mathcal{A}((\cdot ;\cdot),(\cdot ;\cdot))$ 
are defined by
\begin{align*}
a(\bar{\bu},\bar{\bv}) &= a((\bu;\bw),(\bv;\bz)) := (\varepsilon( \bu), \varepsilon(\bv)) + \lambda_\mu ({\rm div}\bu,{\rm div}\bv) + R_p^{-1} (\bw,\bz),
\quad\forall \bar{\bu}, \bar{\bv} \in \bV,\\
b(\bar{\bv},q)&=b((\bv;\bz),q):= -({\rm div}\bv + {\rm div}\bz,q) =: -(\Divv\bar{\bv},q), \quad \forall \bar{\bv} \in \bV, \forall q \in Q, \\
c(p,q)&:= \alpha_p (p,q),\quad \forall p,q \in Q,
\end{align*}
and $B: \bV \rightarrow Q'$, $B=-\Divv$ where $\Divv \bar{\bv} = \Divv (\bv;\bz) := {\rm div}\bv + {\rm div}\bz$.
%we have defined the operator $\Divv: \bV \rightarrow Q'$ by $\Divv \bar{\bv} = \Divv (\bv;\bz) := {\rm div}\bv + {\rm div}\bz$ so that
%the operator  in this example is given by $B = - \Divv$.

The fitted norms we use in this example are determined by 
%$\vert \cdot \vert_Q$ and $\vert \cdot \vert_V$ by 
\begin{align*}
\vert q \vert_Q^2 &:= \left( R_p + \frac{1}{1+\lambda_\mu}\right) (q,q), \qquad \forall q\in Q, \\ 
\vert \bar{\bv} \vert_V^2 &:=a(\bar{\bv},\bar{\bv}), \qquad \forall \bar{\bv} \in \bV.
\end{align*}
The coercivity estimate~\eqref{coerc_a} is 
again trivially fulfilled with $\underline{C}_a=1$.

Next, from the inf-sup conditions \eqref{beta_d} and \eqref{beta_s} we infer that for any $q \in Q$ there exist $\bv_0$ and $\bz_0$
such that $\bar{\bv}_0:=(\bv_0;\bz_0) \in \bV$ such that 
\begin{subequations}\label{choice}
\begin{eqnarray}
-{\rm div} \bz_0 &=& R_p q \quad \mbox{ and } \quad \Vert \bz_0 \Vert^2_{\rm div} \le \beta_d^{-2} R_p^2 \Vert q \Vert^2, \label{choice_d} \\
-{\rm div} \bv_0 &=& \frac{1}{1+\lambda_\mu} q \quad \mbox{ and } \quad \Vert \bv_0 \Vert^2_1 \le \beta_s^{-2} \frac{1}{(1+\lambda_\mu)^2} \Vert q \Vert^2. \label{choice_s}
\end{eqnarray}
\end{subequations}
Now, from~\eqref{choice_d} and \eqref{choice_s} we conclude
\begin{equation}\label{est_d}
\begin{aligned}
\beta_d^{-2} R_p \Vert q \Vert^2 &\ge R_p^{-1} \Vert \bz_0 \Vert^2_{\rm div} = R_p^{-1} \left( \Vert {\rm div}\bz_0 \Vert^2 + \Vert \bz_0 \Vert^2 \right)  = \Vert R_p^{-1/2} {\rm div}\bz_0 \Vert^2 + \Vert R_p^{-1/2} \bz_0 \Vert^2,
\end{aligned}
\end{equation}
%and from~\eqref{choice_s}
\begin{equation}\label{est_s}
\begin{aligned}
\beta_s^{-2} \frac{1}{1+\lambda_\mu} \Vert q \Vert^2 &\ge (1+\lambda_\mu) \Vert \bv_0 \Vert^2_1
=\frac{1}{2} (1+\lambda_\mu) 2 \left( \Vert \nabla \bv_0 \Vert^2 + \Vert \bv_0 \Vert^2 \right) \\
& \ge \frac{1}{2} \left[
(\varepsilon(\bv_0),\varepsilon(\bv_0)) + \lambda_\mu ({\rm div} \bv_0,{\rm div} \bv_0) + (1+\lambda_\mu)  ({\rm div} \bv_0,{\rm div} \bv_0) \right]. 
\end{aligned}
\end{equation}
Choosing $\underline{\beta} := \min \{ \beta_d, \frac{\beta_s}{\sqrt{2}} \}$, we obtain
\begin{align}\label{infex7}
\nonumber \underline{\beta}^{-2} \vert q \vert_Q^2 &= \underline{\beta}^{-2} \left( R_p +\frac{1}{1+\lambda_\mu} \right) \Vert q \Vert^2 \\
& \nonumber 
 \ge  (\varepsilon(\bv_0),\varepsilon(\bv_0)) + \lambda_\mu ({\rm div} \bv_0,{\rm div} \bv_0) +  R_p^{-1} (\bz_0, \bz_0) \\
&\nonumber 
+ R_p^{-1} ({\rm div} \bz_0, {\rm div} \bz_0) + \left( \frac{1}{1+\lambda_\mu} \right)^{-1} ({\rm div} \bv_0, {\rm div} \bv_0) \\
&\ge\vert \bar{\bv}_0 \vert_V^2 + \frac{1}{2} (R_p+\frac{1}{1+\lambda_\mu}+\alpha_p)^{-1} (\Divv \bar{\bv}_0, \Divv \bar{\bv}_0){ \ge \frac12\Vert \bar{\bv}_0\Vert_V^2.}
\end{align}
Hence, in this example \eqref{inf_sup_b} follows 
by combining \eqref{choice_d}, \eqref{choice_s} and \eqref{infex7}. 

\begin{remark}
Noting that for any $\epsilon\in (0,1)$, by Cauchy inequality, we have
\begin{align*}
\Vert \bar{\bv}\Vert_V^2&=\vert \bar{\bv}\vert_V^2+ \langle B\bar{\bv}, \bar{Q}^{-1} B\bar{\bv}\rangle_{Q'\times Q} 
=a(\bar{\bv},\bar{\bv})+ (R_p+\frac{1}{1+\lambda_\mu}+\alpha_p)^{-1}  (\Divv \bar{\bv}, \Divv \bar{\bv})\\
&= (\varepsilon( \bv), \varepsilon(\bv)) + \lambda_\mu ({\rm div}\bv,{\rm div}\bv) + R_p^{-1} (\bz,\bz)\\
&+(R_p+\frac{1}{1+\lambda_\mu}+\alpha_p)^{-1}  ({\rm div} \bv+{\rm div} \bz, {\rm div} \bv+{\rm div} \bz)\\
&\ge (\varepsilon( \bv), \varepsilon(\bv)) + \lambda_\mu ({\rm div}\bv,{\rm div}\bv) + R_p^{-1} (\bz,\bz)\\
&+(\frac{1}{1+\lambda_\mu})^{-1} (1-\epsilon^{-1})({\rm div} \bv, {\rm div} \bv)+(R_p+\frac{1}{1+\lambda_\mu}+\alpha_p)^{-1}  (1-\epsilon)({\rm div} \bz, {\rm div} \bz)\\
&\ge \frac12 (\varepsilon( \bv), \varepsilon(\bv)) + \frac12\lambda_\mu ({\rm div}\bv,{\rm div}\bv) + R_p^{-1} (\bz,\bz)\\
&+({1+\lambda_\mu}) (\frac32-\epsilon^{-1})({\rm div} \bv, {\rm div} \bv)+(R_p+\frac{1}{1+\lambda_\mu}+\alpha_p)^{-1} (1-\epsilon)({\rm div} \bz, {\rm div} \bz). 
\end{align*}
Taking $\epsilon=\frac23$, we have 
\begin{align*}
\Vert \bar{\bv}\Vert_V^2&\ge \frac13 \left(\Vert\varepsilon(\bv)\Vert^2+ \lambda_\mu \Vert{\rm div}\bv\Vert^2+ R_p^{-1} \Vert\bz\Vert^2+(R_p+\frac{1}{1+\lambda_\mu}+\alpha_p)^{-1}\Vert{\rm div} \bz\Vert^2\right),
\end{align*}
from which we conclude the uniform stability result for classical three-field formulation of Biot's consolidation model presented in~\cite{HongKraus2018}.
\end{remark}

A norm-equivalent preconditioner in this final example is, therefore, given by
\begin{align*}
\mathcal{B}:=\begin{bmatrix}
-({\rm div}\varepsilon + \lambda_\mu \nabla {\rm div})^{-1} & &
\\
& (R_p^{-1} I - \nabla  (R_p+\frac{1}{1+\lambda_\mu}+\alpha_p)^{-1} {\rm div})^{-1} &\\
&  & ((R_p+\frac{1}{1+\lambda_\mu}+\alpha_p)I)^{-1} 
\end{bmatrix}.
\end{align*}
%To solve the elasticity subproblem, one can use the multigrid method 
%proposed in \cite{hong2016robust,hong2016uniformly}. And to solve the nearly singular $H(\rm div)$ subproblem, one can use the preconditioners  \cite{Hiptmair.R;Xu.J2007a,kraus2016preconditioning} and multigrid method \cite{vassilevski1996preconditioning,arnold1997preconditioning}. 

\end{example}

\section{Conclusion}

In this paper we have presented a new framework for the stability analysis of perturbed saddle-point problems in variational formulation
in a Hilbert space setting. Our aproach is constructive and is based on a specific norm fitting technique. The main theoretical
result (Theorem~\ref{our_theorem}) is a generalization of the classical splitting theorem (Theorem~\ref{brezzi1}) of Brezzi and
allows to conclude the necessary stability condition (big inf-sup condition) according to Babu\v{s}ka's theory (Theorem~\ref{thm:1})
from conditions similar to those on which Brezzi's theorem is based also in presence of a symmetric positive semidefinite perturbation
term $c(\cdot,\cdot) \not \equiv 0$.

As demonstrated on mixed formulations of generalized Poisson, Stokes, vector Laplacian, and Biot's equations, 
%several examples, most of which were taken from different formulations of Biot's consolidation model, 
the new
norm fitting technique guids the process of defining proper parameter-dependent norms and allows for simple and short proofs of the
stability of perturbed saddle-point problems. 
Although the examples in the present paper are continuous (infinite-dimensional) models, the abstract framework suggests that the
proposed technique is also quite useful when studying the stability of various mixed (finite element) discretizations and
has a wide range of applications.

\bibliographystyle{plain}
\bibliography{stability}

\begin{thebibliography}{10}

\bibitem{Adler2018robust}
J.H. Adler, F.J. Gaspar, X.~Hu, C.~Rodrigo, and L.T. Zikatanov.
\newblock Robust block preconditioners for {B}iot's model.
\newblock In {\em Domain Decomposition Methods in Science and Engineering
  XXIV}, volume 125 of {\em Lecture Notes in Computational Science and
  Engineering}. Springer, Cham., 2018.

\bibitem{arnold1997preconditioning}
D.~Arnold, R.~Falk, and R.~Winther.
\newblock Preconditioning in {H}(div) and applications.
\newblock {\em Mathematics of Computation}, 66(219):957--984, 1997.

\bibitem{arnold2006finite}
D.~N Arnold, R.~S Falk, and R.~Winther.
\newblock Finite element exterior calculus, homological techniques, and
  applications.
\newblock {\em Acta Numerica}, 15:1--155, 2006.

\bibitem{ArnoldWinther2002mixed}
D.~N. Arnold and R.~Winther.
\newblock Mixed finite elements for elasticity.
\newblock {\em Numer. Math.}, 92(3):401--419, 2002.

\bibitem{Babuska1971error}
I.~Babu{\v s}ka.
\newblock Error-bounds for finite element method.
\newblock {\em Numer. Math.}, 16:322--333, 1970/71.

\bibitem{BabuskaAziz1972survey}
I.~Babu\v{s}ka and A.~K. Aziz.
\newblock Survey lectures on the mathematical foundations of the finite element
  method.
\newblock In {\em The mathematical foundations of the finite element method
  with applications to partial differential equations ({P}roc. {S}ympos.,
  {U}niv. {M}aryland, {B}altimore, {M}d., 1972)}, pages 1--359, 1972.
\newblock With the collaboration of G. Fix and R. B. Kellogg.

\bibitem{Bai_etal1993multi}
M.~Bai, D.~Elsworth, and J.-C. Roegiers.
\newblock Multiporosity/multipermeability approach to the simulation of
  naturally fractured reservoirs.
\newblock {\em Water Resources Research}, 29(6):1621--1633, 1993.

\bibitem{Battermann1998preconditioners}
A.~Battermann and M.~Heinkenschloss.
\newblock Preconditioners for {K}arush-{K}uhn-{T}ucker matrices arising in the
  optimal control of distributed systems.
\newblock In {\em Control and estimation of distributed parameter systems
  ({V}orau, 1996)}, volume 126 of {\em Internat. Ser. Numer. Math.}, pages
  15--32. Birkh\"{a}user, Basel, 1998.

\bibitem{BenziGolubLiesen2005numerical}
M.~Benzi, G.~H. Golub, and J.~Liesen.
\newblock Numerical solution of saddle point problems.
\newblock {\em Acta Numer.}, 14:1--137, 2005.

\bibitem{Biot1941general}
M.A. Biot.
\newblock General theory of three-dimensional consolidation.
\newblock {\em J. Appl. Phys.}, 12(2):155--164, 1941.

\bibitem{Biot1955theory}
M.A. Biot.
\newblock Theory of elasticity and consolidation for a porous anisotropic
  solid.
\newblock {\em J. Appl. Phys.}, 26(2):182--185, 1955.

\bibitem{Boffi2013mixed}
D.~Boffi, F.~Brezzi, and M.~Fortin.
\newblock {\em Mixed finite element methods and applications}, volume~44 of
  {\em Springer Ser. Comput. Math.}
\newblock Springer, Heidelberg, 2013.

\bibitem{boon2020robust}
W.M. Boon, M.~Kuchta, K.-A. Mardal, and R.~Ruiz-Baier.
\newblock Robust preconditioners for perturbed saddle-point problems and
  conservative discretizations of {B}iot's equations utilizing total pressure.
\newblock {\em arXiv:2011.05236v1 [math.NA]}, 2020.

\bibitem{Braess1996stability}
D.~Braess.
\newblock Stability of saddle point problems with penalty.
\newblock {\em RAIRO Mod\'{e}l. Math. Anal. Num\'{e}r.}, 30(6):731--742, 1996.

\bibitem{Braess2007finite}
D.~Braess.
\newblock {\em Finite elements}.
\newblock Cambridge University Press, Cambridge, third edition, 2007.
\newblock Theory, fast solvers, and applications in elasticity theory,
  Translated from the German by Larry L. Schumaker.

\bibitem{Brezzi1974}
F.~Brezzi.
\newblock On the existence, uniqueness and approximation of saddle-point
  problems arising from {L}agrangian multipliers.
\newblock {\em Rev. Fran\c{c}aise Automat. Informat. Recherche
  Op\'{e}rationnelle S\'{e}r. Rouge}, 8({\rm R}-2):129--151, 1974.

\bibitem{Brezzi1991mixed}
F.~Brezzi and M.~Fortin.
\newblock {\em Mixed and hybrid finite element methods}, volume~15 of {\em
  Springer Series in Computational Mathematics}.
\newblock Springer-Verlag, New York, 1991.

\bibitem{Burger2002iterative}
M.~Burger and W.~M\"{u}hlhuber.
\newblock Iterative regularization of parameter identification problems by
  sequential quadratic programming methods.
\newblock {\em Inverse Problems}, 18(4):943--969, 2002.

\bibitem{chen2020robust}
S.~Chen, Q.~Hong, J.~Xu, and K.~Yang.
\newblock Robust block preconditioners for poroelasticity.
\newblock {\em Computer Methods in Applied Mechanics and Engineering},
  369:113229, 2020.

\bibitem{Coussy2004poromechanics}
O.~Coussy.
\newblock {\em Poromechanics}.
\newblock John Wiley \& Sons, West Sussex, England, 2004.

\bibitem{Demkowicz2006BabuskaBrezzi}
L.~Demkowicz.
\newblock {B}abu\v{s}ka ${\Leftrightarrow}$ {B}rezzi??
\newblock Technical Report 08-06, ICE, The University of Texas at Austin, Texas
  Institute for Computational and Applied Mathematics, 2006.

\bibitem{discacciati2002mathematical}
M.~Discacciati, E.~Miglio, and A.~Quarteroni.
\newblock Mathematical and numerical models for coupling surface and
  groundwater flows.
\newblock {\em Applied Numerical Mathematics}, 43(1-2):57--74, 2002.

\bibitem{ElmanSilvesterWathen2005finite}
H.~C. Elman, D.~J. Silvester, and A.~J. Wathen.
\newblock {\em Finite elements and fast iterative solvers: with applications in
  incompressible fluid dynamics}.
\newblock Numerical Mathematics and Scientific Computation. Oxford University
  Press, New York, 2005.

\bibitem{ErnGuermond2004theory}
A.~Ern and J.-L. Guermond.
\newblock {\em Theory and practice of finite elements}, volume 159 of {\em
  Applied Mathematical Sciences}.
\newblock Springer-Verlag, New York, 2004.

\bibitem{Gill1981practical}
P.~E. Gill, W.~Murray, and M.~H. Wright.
\newblock {\em Practical optimization}.
\newblock Academic Press, Inc. [Harcourt Brace Jovanovich, Publishers],
  London-New York, 1981.

\bibitem{GiraultRaviart1979finite}
V.~Girault and P.-A. Raviart.
\newblock {\em Finite element approximation of the {N}avier-{S}tokes
  equations}, volume 749 of {\em Lecture Notes in Mathematics}.
\newblock Springer-Verlag, Berlin-New York, 1979.

\bibitem{Glowinski2003finite}
R.~Glowinski.
\newblock Finite element methods for incompressible viscous flow.
\newblock In {\em Handbook of numerical analysis, {V}ol. {IX}}, Handb. Numer.
  Anal., IX, pages 3--1176. North-Holland, Amsterdam, 2003.

\bibitem{Guo2019on}
L.~Guo, Z.~Li, J.~Lyu, Y.~Mei, J.~Vardakis, D.~Chen, C.~Han, X.~Lou, and
  Y.~Ventikos.
\newblock On the validation of a multiple-network poroelastic model using
  arterial spin labeling {MRI} data.
\newblock {\em Frontiers in Computational Neuroscience}, 13, 08 2019.
\newblock PMID: 31551742; PMCID: PMC6733888.

\bibitem{Hall1979computer}
E.~L. Hall.
\newblock {\em Computer image processing and recognition}.
\newblock Academic Press [Harcourt Brace Jovanovich, Publishers], New
  York-London-Toronto, Ont., 1979.
\newblock Computer Science and Applied Mathematics.

\bibitem{Hiptmair.R;Xu.J2007a}
R.~Hiptmair and J.~Xu.
\newblock {Nodal auxiliary space preconditioning in $ {H}({\rm curl})$ and
  ${H}({\rm div})$ spaces}.
\newblock {\em SIAM Journal on Numerical Analysis}, 45(6):2483--2509
  (electronic), 2007.

\bibitem{hong2016uniformly}
Q.~Hong and J.~Kraus.
\newblock Uniformly stable discontinuous {G}alerkin discretization and robust
  iterative solution methods for the {B}rinkman problem.
\newblock {\em SIAM J. Numer. Anal.}, 54(5):2750--2774, 2016.

\bibitem{HongKraus2018}
Q.~Hong and J.~Kraus.
\newblock Parameter-robust stability of classical three-field formulation of
  {{Biot}}'s consolidation model.
\newblock {\em ETNA - Electronic Transactions on Numerical Analysis},
  48:202--226, 2018.

\bibitem{Hong2019conservativeMPET}
Q.~Hong, J.~Kraus, M.~Lymbery, and F.~Philo.
\newblock Conservative discretizations and parameter-robust preconditioners for
  {B}iot and multiple-network flux-based poroelasticity models.
\newblock {\em Numer. Linear Algebra Appl.}, 2019.
\newblock e2242.

\bibitem{Hong2020ParameterUzawa}
Q.~Hong, J.~Kraus, M.~Lymbery, and F.~Philo.
\newblock Parameter-robust uzawa-type iterative methods for double saddle point
  problems arising in biot’s consolidation and multiple-network
  poroelasticity models.
\newblock {\em Mathematical Models and Methods in Applied Sciences},
  30(13):2523--2555, 2020.

\bibitem{HongEtAl2020parameter}
Q.~Hong, J.~Kraus, M.~Lymbery, and M.F. Wheeler.
\newblock Parameter-robust convergence analysis of fixed-stress split iterative
  method for multiple-permeability poroelasticity systems.
\newblock {\em Multiscale Model. Simul.}, 18(2):916--941, 2020.

\bibitem{hong2016robust}
Q.~Hong, J.~Kraus, J.~Xu, and L.~Zikatanov.
\newblock A robust multigrid method for discontinuous {G}alerkin
  discretizations of {S}tokes and linear elasticity equations.
\newblock {\em Numer. Math.}, 132(1):23--49, 2016.

\bibitem{hong2021extended}
Q.~Hong, Y.~Li, and J.~Xu.
\newblock An extended {G}alerkin analysis in finite element exterior calculus.
\newblock {\em arXiv preprint arXiv:2101.09735}, 2021.

\bibitem{kraus2016preconditioning}
J.~Kraus, R.~Lazarov, M.~Lymbery, S.~Margenov, and L.~Zikatanov.
\newblock Preconditioning heterogeneous {H}(div) problems by additive {S}chur
  complement approximation and applications.
\newblock {\em SIAM Journal on Scientific Computing}, 38(2):A875--A898, 2016.

\bibitem{Kumar2020conservative}
S.~Kumar, R.~Oyarz\'ua, R.~Ruiz-Baier, and R.~Sandilya.
\newblock Conservative discontinuous finite volume and mixed schemes for a new
  four-field formulation in poroelasticity.
\newblock {\em Esaim Math. Model. Numer. Anal.}, 54(1):273 -- 299, 2020.

\bibitem{KwokZheng2018saddlepoint}
Y.~K. Kwok and W.~Zheng.
\newblock {\em Saddlepoint approximation methods in financial engineering}.
\newblock SpringerBriefs in Quantitative Finance. Springer, Cham, 2018.

\bibitem{LadyzhenskayaSolonnikov1976problems}
O.~A. Lady\v{z}enskaja and V.~A. Solonnikov.
\newblock Some problems of vector analysis, and generalized formulations of
  boundary value problems for the {N}avier-{S}tokes equation.
\newblock {\em Zap. Nau\v{c}n. Sem. Leningrad. Otdel. Mat. Inst. Steklov.
  (LOMI)}, 59:81--116, 256, 1976.
\newblock Boundary value problems of mathematical physics and related questions
  in the theory of functions, 9.

\bibitem{Ladyzhenskaya1969mathematical}
O.~A. Ladyzhenskaya.
\newblock {\em The mathematical theory of viscous incompressible flow}.
\newblock Second English edition, revised and enlarged. Translated from the
  Russian by Richard A. Silverman and John Chu. Mathematics and its
  Applications, Vol. 2. Gordon and Breach, Science Publishers, New
  York-London-Paris, 1969.

\bibitem{lee2017parameter}
J.~Lee, K.-A. Mardal, and R.~Winther.
\newblock Parameter-robust discretization and preconditioning of {B}iot's
  consolidation model.
\newblock {\em SIAM J. Sci. Comput.}, 39:A1--A24, 2017.

\bibitem{Lee2016robust}
J.J. Lee.
\newblock Robust error analysis of coupled mixed methods for {B}iot's
  consolidation model.
\newblock {\em J. Sci. Comput.}, 69:610--632, 2016.

\bibitem{lee2018mixed}
J.J. Lee, E.~Piersanti, K.-A. Mardal, and M.E. Rognes.
\newblock A mixed finite element method for nearly incompressible
  multiple-network poroelasticity.
\newblock {\em SIAM Journal on Scientific Computing}, 41(2):A722--A747, 2019.

\bibitem{LoghinWathen2004analysis}
D.~Loghin and A.~J. Wathen.
\newblock Analysis of preconditioners for saddle-point problems.
\newblock {\em SIAM J. Sci. Comput.}, 25(6):2029--2049, 2004.

\bibitem{mardal2011preconditioning}
K.-A. Mardal and R.~Winther.
\newblock Preconditioning discretizations of systems of partial differential
  equations.
\newblock {\em Numer. Linear Algebra Appl.}, 18(1):1--40, 2011.

\bibitem{Monk2003finite}
P.~Monk.
\newblock {\em Finite element methods for {M}axwell's equations}.
\newblock Numerical Mathematics and Scientific Computation. Oxford University
  Press, New York, 2003.

\bibitem{MuradLoula1992improved}
M.~A. Murad and A.~F.~D. Loula.
\newblock Improved accuracy in finite element analysis of {B}iot's
  consolidation problem.
\newblock {\em Comput. Methods Appl. Mech. Engrg.}, 95(3):359--382, 1992.

\bibitem{MuradLoula1994stability}
M.~A. Murad and A.~F.~D. Loula.
\newblock On stability and convergence of finite element approximations of
  {B}iot's consolidation problem.
\newblock {\em Internat. J. Numer. Methods Engrg.}, 37(4):645--667, 1994.

\bibitem{Necas1967methodes}
J.~Ne\v{c}as.
\newblock {\em Les m\'{e}thodes directes en th\'{e}orie des \'{e}quations
  elliptiques}.
\newblock Masson et Cie, \'{E}diteurs, Paris; Academia, \'{E}diteurs, Prague,
  1967.

\bibitem{Nicolaides1982existence}
R.~A. Nicolaides.
\newblock Existence, uniqueness and approximation for generalized saddle point
  problems.
\newblock {\em SIAM J. Numer. Anal.}, 19(2):349--357, 1982.

\bibitem{oyarzua2016locking}
R.~Oyarz\'ua and R.~Ruiz-Baier.
\newblock Locking-free finite element methods for poroelasticity.
\newblock {\em SIAM J. Numer. Anal.}, 54:2951--2973, 2016.

\bibitem{PhillipsWheeler2007coupling1}
P.~J. Phillips and M.~F. Wheeler.
\newblock A coupling of mixed and continuous {G}alerkin finite element methods
  for poroelasticity. {I}. {T}he continuous in time case.
\newblock {\em Comput. Geosci.}, 11(2):131--144, 2007.

\bibitem{PhillipsWheeler2007coupling2}
P.~J. Phillips and M.~F. Wheeler.
\newblock A coupling of mixed and continuous {G}alerkin finite element methods
  for poroelasticity. {II}. {T}he discrete-in-time case.
\newblock {\em Comput. Geosci.}, 11(2):145--158, 2007.

\bibitem{Schilders2008model}
W.~H.~A. Schilders, H.~A. van~der Vorst, and J.~Rommes, editors.
\newblock {\em Model order reduction: theory, research aspects and
  applications}, volume~13 of {\em Mathematics in Industry}.
\newblock Springer-Verlag, Berlin, 2008.
\newblock European Consortium for Mathematics in Industry (Berlin).

\bibitem{Temam1984NavierStokes}
R.~Temam.
\newblock {\em Navier-{S}tokes equations}, volume~2 of {\em Studies in
  Mathematics and its Applications}.
\newblock North-Holland Publishing Co., Amsterdam, third edition, 1984.
\newblock Theory and numerical analysis, With an appendix by F. Thomasset.

\bibitem{Terzaghi1925erdbaumechanic}
K.~Terzaghi.
\newblock {\em Erdbaumechanik auf bodenphysikalischer Grundlage}.
\newblock F. Deuticke, 1925.

\bibitem{Toselli2005domain}
A.~Toselli and O.~Widlund.
\newblock {\em Domain decomposition methods---algorithms and theory}, volume~34
  of {\em Springer Series in Computational Mathematics}.
\newblock Springer-Verlag, Berlin, 2005.

\bibitem{TullyVentikos2011cerebral}
B.~Tully and Y.~Ventikos.
\newblock Cerebral water transport using multiple-network poroelastic theory:
  application to normal pressure hydrocephalus.
\newblock {\em J. Fluid Mech.}, 667:188--215, 2011.

\bibitem{vassilevski1996preconditioning}
P.~S. Vassilevski and R.~D. Lazarov.
\newblock Preconditioning mixed finite element saddle-point elliptic problems.
\newblock {\em Numer. Linear Algebra with Appli.}, 3(1):1--20, 1996.

\bibitem{Yi2014convergence}
S.-Y. Yi.
\newblock Convergence analysis of a new mixed finite element method for
  {B}iot's consolidation model.
\newblock {\em Numer. Methods Partial Differ. Equ.}, 30(4):1189--1210, 2014.

\bibitem{Zulehner2011nonstandard}
W.~Zulehner.
\newblock Nonstandard norms and robust estimates for saddle point problems.
\newblock {\em SIAM J. Matrix Anal. Appl.}, 32(2):536--560, 2011.

\end{thebibliography}

\end{document}